\newtheorem{theorem}{Theorem}[section]
\theoremstyle{definition}
\newtheorem{definition}[theorem]{Definition}
\newtheorem{remark}{Remark}
\newcommand{\hu}{\hat{u}}
\newcommand{\hx}{\hat{x}}
\newcommand{\hatt}{\hat{t}}
\newcommand{\heta}{\hat{\eta}}
\newcommand{\hv}{\hat{v}}
\newcommand{\hlambda}{\hat{\lambda}}
\newcommand{\hT}{\hat{T}}
\newcommand{\hP}{\hat{P}}
\newcommand{\unit}[1]{\mathrm{#1}}
\title[The role of viscoelasticity in deformable porous media]
{The role of structural viscoelasticity in deformable porous media with incompressible
constituents: applications in biomechanics}
\address{$^{1}$ Dipartimento di Matematica, Politecnico di Milano,  
				 Piazza L. da Vinci 32, 20133 Milano, Italy}
\address{$^{(2)}$ Department of Electrical Engineering and Computer Science, 
College of Engineering, University of Missouri, 201 Naka Hall, Columbia, MO 65211}
\address{$^{(3)}$ NC State University, Department of Mathematics, SAS Hall 3236, 
Raleigh, NC, 27695, USA}
\author{Maurizio Verri$^{1}$ \and Giovanna Guidoboni$^{2}$ \and Lorena Bociu$^{3}$
\and Riccardo Sacco$^{1}$}
\email{maurizio.verri@polimi.it}
\email{guidobonig@missouri.edu}
\email{lvbociu@ncsu.edu}
\email{riccardo.sacco@polimi.it}
\begin{document}

\date{\today}

\begin{abstract}
The main goal of this work is to clarify and quantify, by means of mathematical analysis, the role of structural viscoelasticity in the biomechanical response of deformable porous media 
with incompressible constituents to sudden changes in external applied loads. 
Models of deformable porous media with incompressible constituents
are often utilized to describe the behavior of biological tissues, such as cartilages, bones and engineered tissue scaffolds, where viscoelastic properties may change with age, disease or by design. Here, for the first time, we show that the fluid velocity within the medium could increase tremendously, even up to infinity, should the external applied load experience sudden changes in time and the structural viscoelasticity be too small. In particular, we consider a one-dimensional poro-visco-elastic model
for which we derive explicit solutions in the cases where the external applied load is characterized by a step pulse or a trapezoidal pulse in time. By means of dimensional analysis, we identify some dimensionless parameters that can aid the design of structural properties and/or experimental conditions as to ensure that the fluid velocity within the medium remains bounded below a certain given threshold, thereby preventing potential tissue damage. The application to confined compression tests for biological tissues is discussed in detail. Interestingly, the loss of viscoelastic tissue properties has been associated with various disease conditions, such as atherosclerosis, Alzheimer's disease and glaucoma. Thus, the findings of this work may be relevant to many applications in biology and medicine.
\end{abstract}

\maketitle

{\bf Keywords:}
Deformable porous media flow; incompressible constituents; viscoelasticity; 
explicit solution; velocity blow-up; confined compression.

%%%%%%%%%%%%%%%%%%%%%%%%%%%%%%%%%%%%%%%%%%
\section{Introduction}
\label{sec:intro}
%%%%%%%%%%%%%%%%%%%%%%%%%%%%%%%%%%%%%%%%%%

Fluid flow through deformable porous  media is relevant for many applications in
biology, medicine and bioengineering. Some important examples include blood flow through tissues in the human body~\cite{chapelle,causin}  and fluid flow inside cartilages, bones and engineered tissue scaffolds~\cite{cowin,lai,Soltz1998,Sacco2011,Causin2013}.
The mechanics of biological tissues typically exhibits both elastic and viscoelastic behaviors resulting from the combined action of various components, including elastin, collagen and extracellular
matrix~\cite{mow,nia,Ozkaya,RechaSancho2016}.
Thus, from the mathematical viewpoint, the study of fluid flows through deformable porous  biological structures requires the coupling of poro-elasticity with structural viscoelasticity, leading to \textit{poro-visco-elastic models}.
%, as described in Section~\ref{sec:model}.

The theoretical study of fluid flow through deformable porous media has attracted a lot of attention since the beginning of the last century, initially motivated by applications in geophysics and petroleum engineering. The development of the field started with the work of Terzaghi in 1925~\cite{terzaghi}, which focused on finding an analytic solution for a one-dimensional (1D) model. However, it was Biot's work in 1941~\cite{biot} that set up the framework and ignited the mathematical development for fluid flow through poro-elastic media. To date, several books and articles have been devoted to the mathematical analysis and numerical investigation of poro-elastic models, such as \cite{Soltz1998,deBoer,coussy,detournay1,detournay2,zenisek,owczarek,show1,show2,cao,phillips,phillips2,phillips3,settari},
with  applications ranging from engineering and geophysics to medicine and biology.
Recently, our team has developed a theoretical and numerical framework to study both poro-elastic and poro-visco-elastic models, as
motivated by biological applications~\cite{ARMA}. The study showed that structural viscoelasticity plays a crucial role in determining the regularity requirements for volumetric and boundary forcing terms, as well as for the  corresponding solutions. Moreover, in~\cite{BBBNG} it has been shown that the solution of the fluid-solid mixture (elastic displacement, fluid pressure, and Darcy velocity) is more sensitive to the boundary traction in the elastic case than in the visco-elastic scenario. These theoretical findings are
also supported by experimental and clinical evidences showing that changes in tissue viscoelasticity are associated with various pathological conditions, including atherosclerosis~\cite{osidak}, osteoporosis~\cite{augat}, renal disease~\cite{guerin} and glaucoma~\cite{downs}.

Interestingly, the study in~\cite{ARMA} provided numerical clues that sudden changes in body forces and/or stress boundary conditions may lead to uncontrolled fluid-dynamical responses within the medium in the absence of structural viscoelasticity. This finding led us to formulate a \textit{novel hypothesis} concerning the causes of damage in biological tissues, namely
that \textit{abrupt time variations in stress conditions combined with lack of structural viscoelasticity could lead to microstructural damage due to local fluid-dynamical alterations}, as illustrated in Fig.~\ref{fig:hypothesis}.

\vspace{.2cm}

%---------------------------------------------------
% tikz for hypothesis illustration
%---------------------------------------------------
\begin{figure}[h!]
\begin{center}
 \includegraphics[scale=0.45]{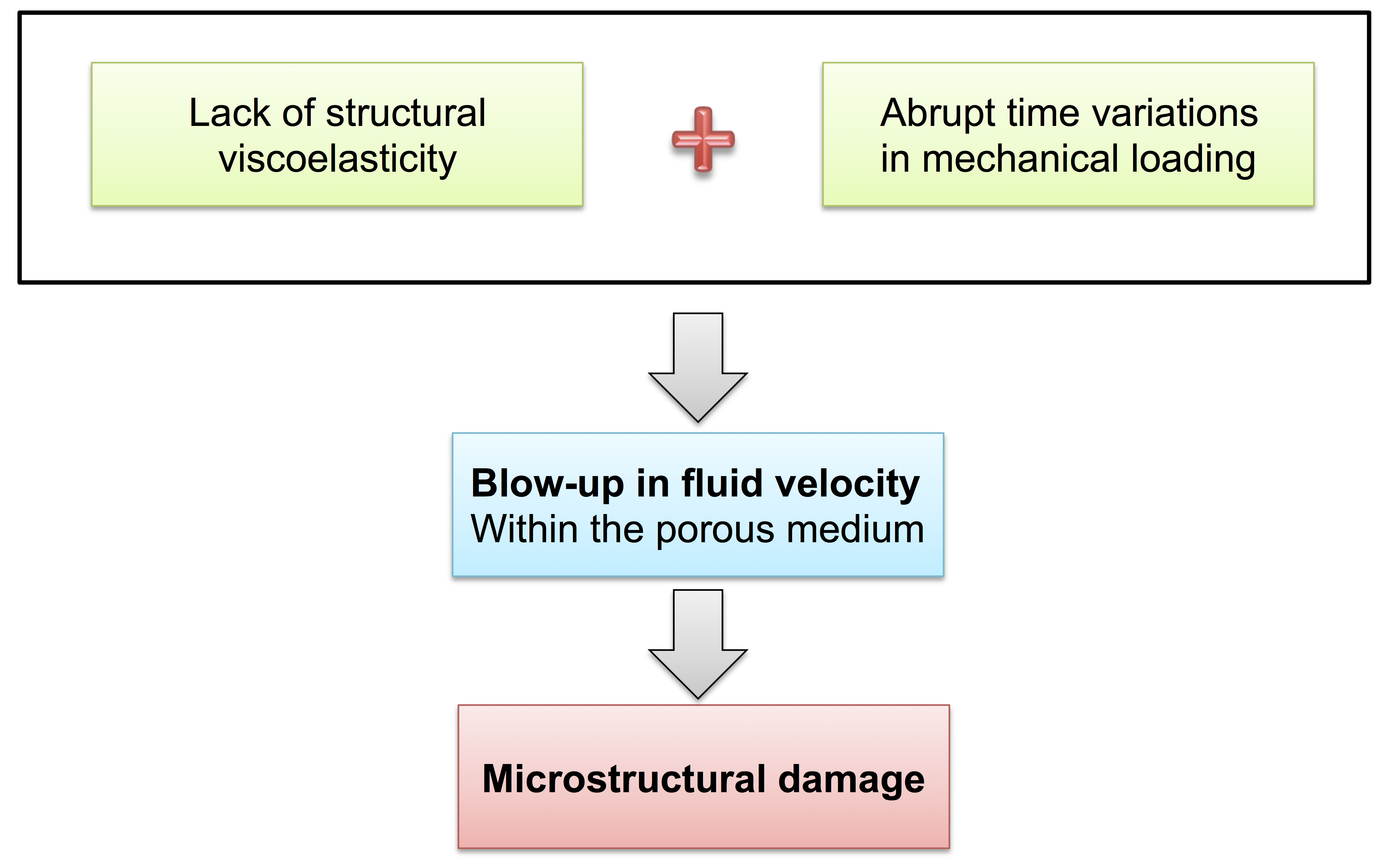}
%\begin{tikzpicture}
%% tile 1
%\begin{scope}[shift={(-0.5,0)}]
%\filldraw[fill=blue!10!white, draw=black] (-3.8,0.8) rectangle (-0.2,-0.8);
%\node[below] at (-2,0.5) {Lack of structural};
%\node[above] at (-2,-0.5) {viscoelasticity};
%\end{scope}
%% tile 2
%\begin{scope}[shift={(5.5,0)}]
%\filldraw[fill=yellow!10!white, draw=black] (-3.9,0.8) rectangle (-0.1,-0.8);
%\node[below] at (-2,0.5) {Abrupt time variations};
%\node[above] at (-2,-0.5) {in mechanical loading};
%\end{scope}
%% tile 3
%\begin{scope}[shift={(2.5,-2.5)}]
%\filldraw[fill=green!10!white, draw=black] (-3.9,0.8) rectangle (-0.2,-0.8);
%\node[below, red] at (-2,0.5) {\bf Blow-up in };
%\node[above, red] at (-2,-0.5) {\bf fluid velocity};
%\end{scope}
%% arrows
%\draw[<->,black, thick] (-0.7,0) -- (1.6,0);
%\node[above] at (0.4,0) {\it and};
%\draw[->,red, thick] (0.45,0) -- (0.45,-1.7);
%%%
%\end{tikzpicture}
\end{center}
\caption{Illustration of a novel hypothesis on potential causes of microstructure damage in deformable porous media. In the case that (i) the medium is subjected to a time-discontinuous mechanical load and (ii) structural viscoelasticity is reduced or absent, then the fluid velocity within the porous medium will experience a blow-up, possibly leading to microstructural damage.}\label{fig:hypothesis}
\end{figure}
%---------------------------------------------------

The importance of the coupling between structural mechanics and fluid dynamics in the damage of deformable porous media has been investigated by several authors~\cite{nia,mahyar,selvadurai}. In the present study, we focus on a particular aspect of this coupling and we aim at characterizing and quantifying the influence of structural viscoelasticity on the biomechanical and fluid-dynamical responses to sudden changes in stress conditions. 
Biomechanical applications are characterized by the fact that tissues
have a mass density that is similar to that of water. 
For this reason, we consider in these pages 
the case of deformable porous media constituted by \emph{incompressible solid and fluid components.} 
To mathematically define this concept, we introduce the following 
relation between fluid pressure $p$, variation of fluid content $\zeta$ and volumetric dilation 
$\nabla \cdot \mathbf{u}$, $\mathbf{u}$ denoting the solid displacement
\begin{subequations}\label{eq:zeta}
\begin{equation}\label{eq:fluid_content}
\zeta = c_0 p + \alpha \nabla \cdot \mathbf{u},
\end{equation}
where $c_0$ is the storage coefficient and $\alpha$ is the Biot coefficient (see~\cite{detournay2}).
In the case of incompressible solid and fluid components, we have $c_0=0$ and $\alpha=1$ 
(see~\cite{detournay2}) so that~\eqref{eq:fluid_content} becomes
\begin{equation}\label{eq:fluid_content_incompressible}
\zeta = \nabla \cdot \mathbf{u}.
\end{equation}
\end{subequations}
Notice that unlike in standard elasticity theory, incompressibility of each component of a deformable 
porous medium \emph{does not} mean that \emph{both} solid displacement and fluid velocity are divergence-free, rather,
that the volumetric deformation of the solid constituent corresponds to the 
variation of fluid volume per unit volume of porous material, with the convention that
$\nabla \cdot \mathbf{u}$ is positive for extension while $\zeta$ is positive for a 
"gain" of fluid by the porous solid.

Building upon our theoretical and numerical results presented in~\cite{ARMA}, we devise a 1D problem  for which
we exhibit an explicit solution where the discharge velocity goes to infinity if the stress boundary condition is not sufficiently smooth in time and the solid component is not viscoelastic. Interestingly, this blow-up in the velocity occurs even in the simple case where the permeability of the medium is assumed to be constant, in comparison to the general case of nonlinear permeability depending on dilation \cite{cao,ARMA} or pressure \cite{show2}. In addition, we perform a dimensional analysis that allows us to identify the parameters influencing the solution blow-up, thereby opening the path to sensitivity analysis on the system, and providing practical directions on how to control the biomechanical and fluid-dynamical response of the fluid-solid mixture, prevent microstructural damage and, in perspective, aid the experimental design of bioengineered tissues~\cite{Freeman2009}.

The article is organized as follows. Poro-visco-elastic models are described in Section~\ref{sec:model}, along with a summary of the related theoretical results. Section~\ref{sec:1D_problem_and_formulation}
focuses on a special 1D case, for which an explicit solution is derived and its well-posedness is studied in
the presence or absence of viscoelasticity. The dimensional analysis of the 1D problem is carried out in Section~\ref{sec:dimensionless_problem}. Solution properties are explored in detail for the particular case of boundary traction with a discontinuity in time, see Section~\ref{sec:step_pulse}, and with a trapezoidal time profile, see Section~\ref{sec:trapezoidal_pulse}. The application of this analysis to the case of confined compression of biological tissues is discussed in Section~\ref{sec:athesian_soltz_case}. Conclusions and perspectives are summarized in Section~\ref{sec:conclusions}.

%%%%%%%%%%%%%%%%%%%%%%%%%%%%%%%%%%%%%%%%%%
\section{Poro-Visco-Elastic Models: description and theoretical results}
\label{sec:model}
%%%%%%%%%%%%%%%%%%%%%%%%%%%%%%%%%%%%%%%%%%

Following the same notation as in~\cite{ARMA}, let $\Omega \subset \mathbb R^d$, with $d=1,2$ or 3, be the region of space occupied by a deformable porous medium and let $(0,T)$ be the observational time interval.
In the assumptions of small deformations, full saturation of the mixture, incompressibility of the mixture 
constituents and negligible inertia, we can write the balance of linear momentum for the mixture and the balance of mass for the fluid component as
\begin{subequations}\label{eq:big_model}
\begin{align}
\nabla \cdot \bm{\sigma} + \mathbf{F} = \mathbf 0 && \mbox{in}\; \Omega\times (0,T)\label{eq:big_model_in}\\
\dfrac{\partial \zeta}{\partial t} + \nabla \cdot \mathbf v = S && \mbox{in}\; \Omega\times (0,T)
\end{align}
respectively, where $\bm{\sigma}$ is the total stress tensor, $\zeta$ is the fluid content, $\mathbf v$ is the discharge velocity and $\mathbf{F} $ and $S$ are volumetric sources of linear momentum and fluid mass, respectively.
The system must be completed by constitutive equations for $\bm{\sigma}$, $\zeta$ and $\mathbf v$. In line with the literature for poro-visco-elastic models in biological applications, we will adopt the following constitutive equations:
\begin{align}
\bm{\sigma} &= \bm{\sigma}_{e} + \bm{\sigma}_{v} - p \mathbf{I}\\
  \bm{\sigma}_{e} &= \lambda_e (\nabla \cdot \mathbf u )\, \mathbf I + 2\mu_e \bm{\epsilon}(\mathbf u)\\
    \bm{\sigma}_{v} &= \lambda_v \left(\nabla \cdot  \dfrac{\partial \mathbf u}{\partial t} 
		\right)\, \mathbf I + 2\mu_v \bm{\epsilon}\left(\dfrac{\partial \mathbf u}{\partial t} \right)\label{eq:sigma_v}\\
    \zeta & = \nabla \cdot \mathbf u \label{eq:def_zeta} \\
    \mathbf v &= -K \nabla p \label{eq:big_model_end}
\end{align}
where
$\mathbf u$ is the displacement of the solid component, $p$ is the Darcy pressure,
$\mathbf{I}$ is the identity tensor and $\bm \epsilon (\mathbf w)=(\nabla \mathbf w + (\nabla \mathbf w)^T)/2$ is the symmetric part of the gradient of the vector field $\mathbf w$. The elastic parameters $\lambda_e$ and $\mu_e$  and the viscoelastic parameters  $\lambda_v$ and $\mu_v$ are assumed to be given positive constants.
We remark that structural viscoelasticity is embodied in the term $\bm{\sigma}_v$ defined in~\eqref{eq:sigma_v}. If structural viscoelasticity is not present, then $\bm{\sigma}_v = \mathbf 0$ and the set of equations~\eqref{eq:big_model_in}-\eqref{eq:big_model_end} defines a \textit{poro-elastic medium}.
In addition, the permeability $K$ may be a positive constant, as in \cite{biot}, or may depend nonlinearly on the solution, for example on the structural dilation, namely $K=K(\nabla \cdot \mathbf u)$, as in~\cite{cao,ARMA}. 
Finally, notice that~\eqref{eq:def_zeta} corresponds to~\eqref{eq:fluid_content_incompressible} 
consistently with the assumption of incompressibility of mixture constituents.
\end{subequations}

Most of the theoretical studies focused on the poro-elastic case without accounting for structural viscoelasticity. In  the case of constant permeability, the coupling between the elastic and fluid subproblems is linear and the well-posedness and regularity of solutions have been studied by several authors \cite{zenisek, owczarek,show1}.
In the case of non-constant permeability, the coupling between the two subproblems becomes nonlinear and only few theoretical results have been obtained. In \cite{show2}, Showalter utilized monotone operator theory techniques in order to provide well-posedness of solutions in the case where the permeability is a nonlinear function of pressure.
%
%Showalter: using semigroup
%theory for linear degenerate evolution equations in Hilbert spaces.
%
%If the permeability is a nonlinear function of pressure, then monotone theory is used to provide well-posedness of solutions \cite{Showalter2}.
%
To the best of our knowledge, Cao et al in \cite{cao} were the first to consider the permeability as a nonlinear function of dilation and provide existence of weak solutions for this nonlinear poro-elastic case. However, the analysis in \cite{cao} is performed upon assuming 
homogeneous boundary conditions for both pressure and elastic displacement, which is often not the case from the viewpoint of applications.

Our recent paper in \cite{ARMA} extends  the works mentioned above by considering
%nonlinear (due to the coupling)
poro-elastic and poro-visco-elastic models with dilation-dependent permeability, non-zero volumetric sources of mass and momentum and non-homogeneous, mixed Dirichlet-Neumann boundary conditions. More precisely, in \cite{ARMA} we assumed that  the boundary of $\Omega$ can be decomposed as $\partial \Omega = \Gamma_N \cup \Gamma_D$, with $\Gamma_D = \Gamma_{D,p}\cup \Gamma_{D,v}$, where the following boundary conditions are imposed:
\begin{align}
\bm\sigma \cdot \mathbf n = \mathbf g, \quad \mathbf v \cdot \mathbf n = 0 && \mbox{on}\; \Gamma_N\times (0,T)\\
\mathbf u  = \mathbf 0, \quad p = 0 && \mbox{on}\; \Gamma_{D_p}\times (0,T)\\
\mathbf u  = \mathbf 0, \quad \mathbf v \cdot \mathbf n = \psi && \mbox{on}\; \Gamma_{D_v}\times (0,T).
\end{align}
Our analysis showed that the data time regularity requirements and the smoothness of solutions significantly differ depending on whether the model is poro-elastic or poro-visco-elastic. In particular, in the visco-elastic case, if the source of linear momentum $\mathbf F$ is $L^2$ in time and space, namely $\mathbf F\in L^2(0,T;(L^2(\Omega))^d)$, and the source of boundary traction $\mathbf g$ is $L^2$ in time and $H^{1/2}$ on the boundary, namely $\mathbf g \in L^2(0,T;(H^{1/2}(\Gamma_N))^d)$,
 then there exists a weak solution to the system, with elastic displacement  $\mathbf u \in H^1$ in time and space, namely $\mathbf u \in H^1(0,T;(H^1(\Omega))^d)$, and pressure  $p \in L^2(0,T;H^1(\Omega)) $.
 In comparison, in the poro-elastic case, one needs both $\mathbf F$ and $\mathbf g$ to be $H^1$ in time in order to obtain a solution where both $\mathbf u$ and $p$ are $L^2$ in time and $H^1$ in space. This proves that the system dynamics fundamentally changes as visco-elastic effects fade away. Moreover, the numerical simulations performed in \cite{ARMA} hinted at blow-up of fluid energy functional in the poro-elastic case with non-smooth sources $\mathbf F$ and $\mathbf g$. In the following sections, we are going to further elaborate these concepts by means of a 1D poro-visco-elastic model for which we can obtain an explicit solution and assess the effect of structural viscoelasticity on the response of the deformable porous medium to sudden changes in mechanical load. The 1D model is described next.

\section{1D poro-visco-elastic model: description and formulation}\label{sec:1D_problem_and_formulation}

Let the space domain $\Omega$ be given by the
interval $(0,L)$ and the volumetric sources of linear momentum and fluid
mass be zero. In this case, the balance of linear momentum for the mixture
and the balance of mass for the fluid component simplify to
\begin{subequations}\label{eq:1D_model}
\begin{align}
\dfrac{\partial \sigma }{\partial x}=0&  & & \mbox{in}\;(0,L)\times (0,T)
\label{eq:1D_model_in} \\
\dfrac{\partial ^{2}u}{\partial t\partial x}+\dfrac{\partial v}{\partial x}%
=0&  & & \mbox{in}\;(0,L)\times (0,T).  \label{eq:1D_model_in2}
\end{align}%
\end{subequations}
The associated constitutive equations are given by
\begin{subequations}\label{eq:1D_constitutive_laws}
\begin{align}
\sigma _{0}=\mu \dfrac{\partial u}{\partial x}+\eta \dfrac{\partial ^{2}u}{%
\partial t\partial x}&  & & \mbox{in}\;(0,L)\times (0,T) \label{eq:1D_model_sigma_0} \\
\sigma =\sigma _{0}-p&  & & \mbox{in}\;(0,L)\times (0,T)
\label{eq:1D_model_sigma} \\
v=-K\dfrac{\partial p}{\partial x}&  & & \mbox{in}\;(0,L)\times (0,T)
\label{eq:1D_model_v}
\end{align}%
\end{subequations}
where we have set $\mu =\lambda _{e}+2\mu _{e}$, $\eta =\lambda
_{v}+2\mu _{v}$ and we have introduced the symbol $\sigma_0$ to denote the contribution of the solid component to the total stress tensor $\sigma$. We emphasize that the permeability may be a function of space and dilation, namely
$$
K=K\left(x,\frac{\partial u}{\partial x}\right).
$$
We complete the system with the
following boundary and initial conditions:
\begin{subequations}\label{eq:1D_BC_IC_s}
\begin{align}
u\left( 0,t\right) =v\left( 0,t\right) =0&  & & \mbox{for}\;0<t<T
\label{eq:bottom_bc} \\
p\left( L,t\right) =0\ ;\ \sigma \left( L,t\right) =-P\left( t\right) &  & & %
\mbox{for}\;0<t<T  \label{eq:top_bc} \\
u\left( x,0\right) =0&  & & \mbox{for}\;0<x<L  \label{eq:1D_model_end}
\end{align}%
\end{subequations}
where $g(t) = -P(t)$ is the boundary traction. For the ease of reference, the boundary conditions are schematized in Fig.~\ref{fig:1D_model}.
%---------------------------------------------------
% tikz for Cartesian frame of reference and material particle
%---------------------------------------------------
\begin{figure}[h!]
\begin{center}
\begin{tikzpicture}
% porous medium
\node[inner sep=0pt] at (0,0)
    {\includegraphics[width=.4\textwidth]{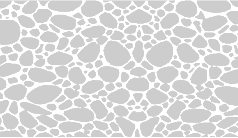}};
% lower boundary
\draw [fill=gray] (-3,-2) rectangle (3,-1.4);
\node[right] at (3.2,-1.3) {rigid impermeable};
\node[right] at (3.2,-1.7) {boundary};
\node[right] at (3.2,-2.1) {$u=0, \; v=0$};
% upper boundary
\draw (-3,2) rectangle (3,1.4);
\draw[->,black] (-2.5,1.9) -- (-2.5,1.5);
\draw[->,black] (-2,1.9) -- (-2,1.5);
\draw[->,black] (-1.5,1.9) -- (-1.5,1.5);
\draw[->,black] (-1,1.9) -- (-1,1.5);
\draw[->,black] (-0.5,1.9) -- (-0.5,1.5);
\draw[->,black] (2.5,1.9) -- (2.5,1.5);
\draw[->,black] (2,1.9) -- (2,1.5);
\draw[->,black] (1.5,1.9) -- (1.5,1.5);
\draw[->,black] (1,1.9) -- (1,1.5);
\draw[->,black] (0.5,1.9) -- (0.5,1.5);
\node[right] at (3.2,1.9) {permeable piston};
\node[right] at (3.2,1.5) {$\sigma=-P(t), \; p=0$};
% x-axis
\draw[black, thick] (0,-1.4) -- (0,1.4);
\draw[->,black, dashed] (0,1.4) -- (0,2.4);
\node[left, black] at (0,2.4) {$x$};
\draw[black, dashed] (0,-2.4) -- (0,-1.4);
\fill (0,1.4) circle (.4ex);
\fill (0,-1.4) circle (.4ex);
\node[left] at (0,-1.2) {$0$};
\node[left] at (0,1.2) {$L$};
\draw[->,gray] (-4,-0.8) -- (-1,0);
\node[above,gray] at (-5,-0.8) {deformable};
\node[above,gray] at (-5,-1.2) {porous};
\node[above,gray] at (-5,-1.5) {medium};
\end{tikzpicture}
\end{center}
\caption{Schematic representation of the one-dimensional problem considered in this article. The deformable porous medium may be either poro-elastic or poro-visco-elastic. The forcing term $P(t)$ may have discontinuities in time.}
\label{fig:1D_model}
\end{figure}
%---------------------------------------------------

If $\eta =0$ (i.e., no viscoelasticity is present) and under the assumption
that the permeability $K$ and the boundary forcing term $P$ are given positive constants, system~\eqref{eq:1D_model}
degenerates into the poro-elastic model studied experimentally and
analytically in~\cite{Soltz1998}. Specifically, the boundary and initial
conditions~\eqref{eq:bottom_bc}-\eqref{eq:1D_model_end} correspond to the
creep experimental test performed in confined compression, where the boundary condition~\eqref{eq:bottom_bc}
represents an impermeable interface at the bottom of the specimen whereas
the boundary condition~\eqref{eq:top_bc} represents a free-draining permeable piston
at the top of the specimen.

System~\eqref{eq:1D_model_in}-\eqref{eq:1D_model_end} can be rewritten solely in
terms of the displacement. Indeed, integration of (\ref{eq:1D_model_in2}%
) with respect to $x$ yields
\begin{equation*}
\dfrac{\partial u}{\partial t}(x,t)+v(x,t)=A(t)
\end{equation*}%
where $A\left( t\right) $ is arbitrary. Thus, taking \eqref{eq:bottom_bc} into
account, we obtain that $A\left( t\right) =0$. From (\ref{eq:1D_model_v}), we then have that%
\begin{equation*}
\dfrac{\partial u}{\partial t}-K\dfrac{\partial p}{\partial x}=0\quad \text{%
in}\ (0,L)\times (0,T)\,.
\end{equation*}%
On the other hand, using (\ref{eq:1D_model_sigma}) and (\ref{eq:1D_model_in}),
we derive that
\begin{equation*}
\dfrac{\partial p}{\partial x}=\dfrac{\partial \sigma _{0}}{\partial x}\,.
\end{equation*}%
Therefore the system \eqref{eq:1D_model_in}-\eqref{eq:1D_model_end} reduces to the following
initial boundary value problem in terms of the sole elastic displacement $u$:
\begin{subequations}
\label{eq:1d_problem_per_u}
\begin{align}
\dfrac{\partial u}{\partial t}-K\mu \dfrac{\partial ^{2}u}{\partial x^{2}}%
-K\eta \dfrac{\partial ^{3}u}{\partial t\partial x^{2}}=0 &  & & %
\mbox{in}\ (0,L)\times (0,T) \label{eq:balance_for_u} \\[3mm]
\mu \dfrac{\partial u}{\partial x}\left( L,t\right) +\eta \dfrac{\partial
^{2}u}{\partial t\partial x}\left( L,t\right) =-P\left( t\right)
& & &
\mbox{for}\;0<t<T \label{eq:bc_x_L} \\
u\left( 0,t\right) =0 & & & \mbox{for}\;0<t<T \label{eq:bc_x_0} \\
u\left( x,0\right) =0 & & & \mbox{for}\;0<x<L \,.\label{eq:ic}
\end{align}%
\end{subequations}
Subsequently, we can recover the solid part of the stress tensor, discharge velocity,
pressure and total stress tensor on $(0,L)\times (0,T)$, respectively, as
\begin{subequations}
\label{eq:1d_problem}
\begin{eqnarray}
\sigma _{0} &=&\mu \dfrac{\partial u}{\partial x}+\eta \dfrac{\partial ^{2}u%
}{\partial t\partial x} \\
v &=&-\dfrac{\partial u}{\partial t}=-K\dfrac{\partial \sigma _{0}}{\partial
x}  \label{v} \\
p &=&\sigma _{0}+P\left( t\right)   \label{p} \\
\sigma  &=&-P\left( t\right) \,. \label{eq:1d_problem_end}
\end{eqnarray}
\end{subequations}

\begin{remark}
For later reference, we write below the explicit form of the purely elastic problem which corresponds to setting $\eta =0$ in \eqref{eq:1d_problem_per_u}:
\begin{subequations}
\label{eq:1d_problem_per_u_eta =0}
\begin{align}
\dfrac{\partial u}{\partial t}-K\mu \dfrac{\partial ^{2}u}{\partial x^{2}}%
=0 &  & & %
\mbox{in}\ (0,L)\times (0,T) \label{eq:balance_for_u_eta =0} \\[3mm]
\mu \dfrac{\partial u}{\partial x}\left( L,t\right) =-P\left( t\right)
& & &
\mbox{for}\;0<t<T \label{eq:bc_x_L_eta =0} \\
u\left( 0,t\right) =0 & & & \mbox{for}\;0<t<T \label{eq:bc_x_0_eta =0} \\
u\left( x,0\right) =0 & & & \mbox{for}\;0<x<L\,. \label{eq:ic_eta =0}
\end{align}%
\end{subequations}
\end{remark}

\begin{remark}
An important quantity associated with the fluid-solid mixture is the fluid
power density $\mathcal{P}\left( t\right)$, defined as
\begin{equation}
\mathcal{P}\left( t\right) =\int_{0}^{L}\frac{1}{K}\left\vert v\left(
x,t\right) \right\vert ^{2}dx.  \label{power}
\end{equation}
From its definition, it follows that $\mathcal{P}\left( t\right)$ depends on both discharge velocity and permeability.
\end{remark}

\subsection{Formal derivation of the solution}
\label{sec:elast}

Let us further assume that the permeability is constant, i.e.
\begin{equation*}
K=K\left(x,\frac{\partial u}{\partial x}\right)\equiv K_{0}=\mathrm{constant}%
>0.
\end{equation*}%
In this case, system (\ref{eq:1d_problem_per_u}) is a linear
initial boundary value problem,
whose solution can be obtained by Fourier series expansion as described below.

{\bf Case 1.} ($\eta >0$). First of all, we homogenize the boundary condition by
setting%
\begin{equation*}
w\left( x,t\right) =u\left( x,t\right) +\frac{U\left( t\right) }{\mu }x
\end{equation*}%
having introduced the auxiliary function
\begin{equation*}
U\left( t\right) =\frac{\mu }{\eta }\int_{0}^{t}\exp \left( -\frac{\mu }{%
\eta }\left( t-s\right) \right) P\left( s\right) ds=\frac{\mu }{\eta }\exp
\left( -\frac{\mu }{\eta }t\right) \ast P\left( t\right)
\end{equation*}%
where the star symbol denotes convolution. Thus, $w\left(
x,t\right) $ satisfies the problem%
\begin{equation}
\begin{array}{rll}
\dfrac{\partial w}{\partial t}-K_{0}\mu \dfrac{\partial ^{2}w}{\partial x^{2}%
}-K_{0}\eta \dfrac{\partial ^{3}w}{\partial t\partial x^{2}}=\dfrac{x}{\mu }%
U^{\prime }\left( t\right)  & \qquad  & \mbox{in}\ (0,L)\times (0,T) \\[3mm]
\mu \dfrac{\partial w}{\partial x}\left( L,t\right) +\eta \dfrac{\partial
^{2}w}{\partial t\partial x}\left( L,t\right) =0 & \qquad  & \mbox{for}%
\;0<t<T \\
w\left( 0,t\right) =0 & \qquad  & \mbox{for}\;0<t<T \\
w\left( x,0\right) =0 & \qquad  & \mbox{for}\;0<x<L%
\end{array}
\label{P_w}
\end{equation}%
where the prime symbol denotes differentiation for functions of a single variable.
The associated eigenvalue
problem is
\begin{center}
\it
Find $y=y\left( x\right) $, $0<x<L$, such that%
\begin{equation*}
y^{\prime \prime }+\lambda y=0,\qquad y\left( 0\right) =y^{\prime }\left(
L\right) =0\,.
\end{equation*}%
\end{center}
The eigenvalues and the corresponding eigenfunctions are given by
\begin{equation}
\lambda _{n}=\frac{(2n+1)^{2}\pi ^{2}}{4L^{2}}\quad \mbox{and}\quad
y_{n}(x)=\sin \frac{(2n+1)\pi x}{2L}  \label{lambda_y_n}\quad \mbox{for}\quad
n=0,1,\dots
\end{equation}%
We seek a
solution of the form%
\begin{equation*}
w\left( x,t\right) =\sum_{n=0}^{\infty }c_{n}\left( t\right) y_{n}\left(
x\right)
\end{equation*}%
where the coefficients $c_{n}\left( t\right) $ of the above series are determined
by the differential equation and the initial condition in (\ref{P_w}%
). Using the Fourier Sine series expansion%
\begin{equation*}
x=\frac{2}{L}\sum_{n=0}^{\infty }\frac{\left( -1\right) ^{n}}{\lambda _{n}}%
y_{n}\left( x\right) ,\qquad 0\leq x\leq L
\end{equation*}%
the uniqueness of the Fourier expansion leads to the family of ordinary differential equations%
\begin{equation*}
\left( 1+K_{0}\eta \lambda _{n}\right) c_{n}^{\prime }\left( t\right)
+K_{0}\mu \lambda _{n}c_{n}\left( t\right) =\frac{2\left( -1\right) ^{n}}{%
\mu L\lambda _{n}}U^{\prime }\left( t\right) ,\qquad c_{n}\left( 0\right) =0
\end{equation*}%
whose solution is given by%
\begin{equation*}
c_{n}\left( t\right) =\frac{2\left( -1\right) ^{n}}{\mu L\lambda _{n}\left(
1+K_{0}\eta \lambda _{n}\right) }\exp \left( -\frac{K_{0}\mu \lambda _{n}}{%
1+K_{0}\eta \lambda _{n}}t\right) \ast U^{\prime }\left( t\right)\,.
\end{equation*}%
Therefore, we get
\begin{equation*}
u\left( x,t\right) =-\frac{U\left( t\right) }{\mu }x+\frac{2}{\mu L}%
\sum_{n=0}^{\infty }\frac{\left( -1\right) ^{n}y_{n}\left( x\right) }{%
\lambda _{n}\left( 1+K_{0}\eta \lambda _{n}\right) }\exp \left( -\frac{%
K_{0}\mu \lambda _{n}}{1+K_{0}\eta \lambda _{n}}t\right) \ast U^{\prime
}\left( t\right) .
\end{equation*}
In conclusion, after performing integration by parts in the convolution term and using the
identity $\eta U^{\prime }\left( t\right) +\mu U\left( t\right) =\mu P\left(
t\right) $, the formal solution to problem~\eqref{eq:1d_problem_per_u} is given by
\begin{equation}
u\left( x,t\right) =-\frac{2K_{0}}{L}\sum_{n=0}^{\infty }\frac{\left(
-1\right) ^{n}y_{n}\left( x\right) }{1+K_{0}\eta \lambda _{n}}\exp \left( -%
\frac{K_{0}\mu \lambda _{n}}{1+K_{0}\eta \lambda _{n}}t\right) \ast P\left(
t\right) \,.  \label{solution 1}
\end{equation}

{\bf Case 2.} ($\eta =0$).
A direct substitution of expression~\eqref{solution 1} (with $\eta=0$)
into problem~\eqref{eq:1d_problem_per_u_eta =0} shows that
\begin{equation}
u\left( x,t\right) =-\frac{2K_{0}}{L}\sum_{n=0}^{\infty }
\left(-1\right) ^{n}y_{n}\left( x\right) \exp \left(-K_{0}\mu \lambda _{n} t\right)
\ast P\left( t\right)   \label{solution 2_bis}
\end{equation}
is the formal solution of the purely elastic problem.
In particular, in the case $P(t)=\mathrm{constant}$,
expression (\ref{solution 2_bis}) coincides with Eq.~(8) of~\cite{Soltz1998}.

\subsection{Weak solutions and well-posedness}
In this section we prove that the
formal solutions~\eqref{solution 1} and~\eqref{solution 2_bis} indeed solve the visco-elastic problem~\eqref{eq:1d_problem_per_u}  and the purely elastic problem~\eqref{eq:1d_problem_per_u_eta =0}, respectively, in well-defined functional spaces. Let us begin by introducing the
functional framework. We consider the real Hilbert
space $H=L^{2}\left( 0,L\right) $ equipped with the scalar product
\begin{equation*}
(f,g)_{H}=\int_{0}^{L}f(x)g(x)\,dx\qquad \forall f,g\in H,
\end{equation*}%
and endowed with the induced norm
\begin{equation*}
\left\Vert f\right\Vert _{H}=\sqrt{(f,f)_{H}}.
\end{equation*}%
The orthonormal sequence of eigenfunctions $\left\{ \sqrt{\frac{2}{L}}%
y_{n}\left( x\right) \right\} $ forms a Hilbert space basis for $H$. A
distribution $v$ in $\left( 0,L\right) $ belongs to $H$ if and only if it
has a series expansion (converging in $\mathcal{D}^{\prime }\left(
0,L\right) $)
\begin{equation}
v\left( x\right) =\sum_{n=0}^{\infty }c_{n}y_{n}\left( x\right),
\label{series}
\end{equation}%
% {\color{red} Above, I would only write the series, without $v(x) = ...$, b/c that is usually written when the series is point-wise convergent. } {\color{magenta} [MAU] Do you mean $v =\sum_{n=0}^{\infty }c_{n}y_{n}$ ? I agree.}
with coefficients $c_{n}$ satisfying%
\begin{equation*}
\sum_{n=0}^{\infty }\left\vert c_{n}\right\vert ^{2}<\infty.
\end{equation*}%
If this is the case, the series expansion of $v$ converges in the mean.
Next, let
\begin{equation*}
V=\left\{ v\in H:v^{\prime }\in H,v\left( 0\right) =0\right\}
\end{equation*}%
be the real Hilbert space equipped with the scalar product
\begin{equation*}
\left( v,w\right) _{V}=\left( v^{\prime },w^{\prime }\right) _{H}\qquad \forall
v,w\in V,
\end{equation*}%
and endowed with the induced norm (due to Poincar{\'{e}}'s inequality)
\begin{equation*}
\left\Vert v\right\Vert _{V}=\left\Vert v^{\prime }\right\Vert _{H}.
\end{equation*}%
Sobolev's Embedding Theorem ensures that $V\subset C^{0}\left( \left[
0,L\right] \right) $, so that the boundary value $v\left( 0\right) =0$ for $%
v\in V$ is assumed in a strong sense. The sequence of functions $\left\{
\sqrt{\frac{2}{L\lambda _{n}}}y_{n}\left( x\right) \right\} $ forms a
Hilbert space basis for $V$. A function $v\in H$ belongs to $V$ if and only
if the Fourier coefficients of the series expansion (\ref{series}) satisfy%
\begin{equation*}
\sum_{n=0}^{\infty }\lambda _{n}\left\vert c_{n}\right\vert ^{2}<\infty.
\end{equation*}
More generally, the eigenfunction expansion (\ref{series}) enables us to define a
one-parameter family $V^{s}$ ($s\in \mathbb{R}$) of spaces: the
distribution (\ref{series}) belongs to $V^{s}$ if and only if its coefficients $c_{n}$
satisfy%
\begin{equation*}
\sum_{n=0}^{\infty }\lambda _{n}^{s}\left\vert c_{n}\right\vert ^{2} <\infty \,.
\end{equation*}%
In particular, we have that $V^{-1}\equiv V^{\prime }$ (the dual of $V$).

{\bf Case 1.} ($\eta >0$). 
% {\color{blue}To formulate a definition of a (weak) solution to problem (\ref{eq:1d_problem_per_u}), 
% we first suppose that the solution $u$ is smooth. Multiplying (\ref{eq:balance_for_u}) by a function $v\in V$, integrating the result over $\left(
% 0,L\right) $, performing integration by parts, accounting for the boundary condition \eqref{eq:bc_x_L} 
% and the fact that $v\in V$ implies  $v\left(
% 0\right) =0$, we get%
% \begin{eqnarray*}
% 0 &=&\int_{0}^{L}\dfrac{\partial u}{\partial t}vdx-K_{0}\int_{0}^{L}\left(
% \mu \dfrac{\partial ^{2}u}{\partial x^{2}}+\eta \dfrac{\partial ^{3}u}{%
% \partial t\partial x^{2}}\right) vdx \\
% &=&\int_{0}^{L}\dfrac{\partial u}{\partial t}vdx+K_{0}\int_{0}^{L}\left( \mu
% \dfrac{\partial u}{\partial x}+\eta \dfrac{\partial ^{2}u}{\partial
% t\partial x}\right) \frac{\partial v}{\partial x}dx+K_{0}P\left( t\right)
% v\left( L\right).
% \end{eqnarray*}%
% The above relation suggests the following definition of weak solution:}
Now we introduce the definition of weak solution for the poro-visco-elastic case.

\begin{definition}
\label{def weak sol}A function $u:\left[ 0,T\right] \rightarrow V$ is a weak
solution to problem (\ref{eq:1d_problem_per_u}) if:

\begin{description}
\item[(i)] $u\in H^{1}\left( 0,T;V\right) $, implying that $u,u^{\prime }\in L^{2}\left(
0,T;V\right) $;

\item[(ii)] for every $v\in V$ and for $t$ pointwise a.e. in $\left[ 0,T\right] $,
\begin{equation}
\left( u^{\prime }\left( t\right) ,v\right) _{H}+K_{0}\left( \mu u\left(
t\right) +\eta u^{\prime }\left( t\right) ,v\right) _{V}=-K_{0}P\left(
t\right) v\left( L\right) ;  \label{PDE weak}
\end{equation}

\item[(iii)] $u\left( 0\right) =0,$
\end{description}
where we used the notation $%
u\left( t\right) =u\left( \cdot ,t\right) $ and $u^{\prime }\left( t\right) =%
\dfrac{\partial u}{\partial t}\left( \cdot ,t\right) $.
\end{definition}

\begin{remark}
The initial condition (\ref{eq:ic}) is satisfied by $u\left( t\right)
\in V$ in the pointwise sense of condition (iii), since it is well known that
condition (i) implies that $u\in C^{0}\left( \left[ 0,T\right] ;V\right) $.
The Dirichlet boundary condition (\ref{eq:bc_x_0}) at $x=0$ is included in the requirement
that $u\left( t\right) \in V$, whereas the boundary condition (\ref{eq:bc_x_L}) at $x=L$ is
taken into account in the linear form on the right hand side of (\ref{PDE
weak}), where $v\left( L\right) $ is the pointwise value of the (continuous)
test function $v\in V$.
\end{remark}

\begin{theorem}
Suppose $P(t)\in L^{2}(0,T)$. Then there is a unique weak solution $u$ to problem (\ref{eq:1d_problem_per_u}), in the sense of Definition \ref{def weak sol}. Moreover, $%
u$ is represented by the series expansion (\ref{solution 1}).
\end{theorem}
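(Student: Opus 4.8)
The plan is to treat the well-posedness exactly as one treats a linear parabolic problem whose spatial operator is diagonalized by the orthonormal basis $\{\sqrt{2/L}\,y_{n}\}$ of $H$. Existence I would obtain not by an abstract semigroup argument but by showing directly that the series in~\eqref{solution 1} converges in $H^{1}(0,T;V)$ and satisfies the identity~\eqref{PDE weak} term by term; note that, because the eigenfunctions decouple the equations, the Galerkin approximations associated with $\mathrm{span}\{y_{0},\dots,y_{N}\}$ are precisely the partial sums of~\eqref{solution 1}, so the ``spectral'' and ``Galerkin'' routes coincide here. Uniqueness I would get from an energy identity in which the viscoelastic coefficient $\eta>0$ is essential.

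\emph{Existence.} Writing the candidate as $u(\cdot,t)=\sum_{n\ge 0}b_{n}(t)\,y_{n}$ with
\begin{equation*}
b_{n}(t)=-\frac{2K_{0}}{L}\,\frac{(-1)^{n}}{1+K_{0}\eta\lambda_{n}}\,\big(e_{n}\ast P\big)(t),\qquad
e_{n}(t):=\exp\!\Big(-\frac{K_{0}\mu\lambda_{n}}{1+K_{0}\eta\lambda_{n}}\,t\Big),
\end{equation*}
I would bound $\|e_{n}\|_{L^{1}(0,T)}\le (1+K_{0}\eta\lambda_{n})/(K_{0}\mu\lambda_{n})$ and use Young's convolution inequality, $\|e_{n}\ast P\|_{L^{2}(0,T)}\le\|e_{n}\|_{L^{1}}\|P\|_{L^{2}}$, to get $\lambda_{n}\|b_{n}\|_{L^{2}(0,T)}^{2}\le C\,\mu^{-2}\lambda_{n}^{-1}\|P\|_{L^{2}(0,T)}^{2}$; since $\lambda_{n}\sim n^{2}$ this sums, so $u\in L^{2}(0,T;V)$. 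For the time derivative, $e_{n}(0)=1$ gives $(e_{n}\ast P)'=P-\tfrac{K_{0}\mu\lambda_{n}}{1+K_{0}\eta\lambda_{n}}(e_{n}\ast P)$, hence $\|b_{n}'\|_{L^{2}(0,T)}\le C\,(1+K_{0}\eta\lambda_{n})^{-1}\|P\|_{L^{2}}$, and now $\eta>0$ yields $1+K_{0}\eta\lambda_{n}\ge K_{0}\eta\lambda_{n}$, so $\lambda_{n}\|b_{n}'\|_{L^{2}}^{2}\le C\,\eta^{-2}\lambda_{n}^{-1}\|P\|_{L^{2}}^{2}$, again summable; closedness of the distributional $t$-derivative then identifies $u'=\sum_{n}b_{n}'y_{n}$ and gives $u\in H^{1}(0,T;V)$. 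I would emphasize that $\eta>0$ enters exactly here: for $\eta=0$ one only controls $\lambda_{n}\|b_{n}'\|_{L^{2}}^{2}$ by $C\lambda_{n}\|P\|^{2}$, which fails to sum — this is the analytic shadow of the regularity gap discussed in Section~\ref{sec:model}.

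\emph{Verification of~\eqref{PDE weak} and uniqueness.} Using $(y_{m},y_{n})_{H}=\tfrac{L}{2}\delta_{mn}$, $(y_{m},y_{n})_{V}=\tfrac{L}{2}\lambda_{n}\delta_{mn}$ (the latter by integration by parts, since $y_{n}'(L)=0$), and $y_{n}(L)=(-1)^{n}$, testing~\eqref{PDE weak} against $v=y_{n}$ reduces to the scalar ODE $(1+K_{0}\eta\lambda_{n})b_{n}'+K_{0}\mu\lambda_{n}b_{n}=-\tfrac{2K_{0}}{L}(-1)^{n}P(t)$ with $b_{n}(0)=(e_{n}\ast P)(0)=0$, which holds by construction of $b_{n}$. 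For general $v=\sum_{n}d_{n}y_{n}\in V$ (so $\sum_{n}\lambda_{n}|d_{n}|^{2}<\infty$) I would note that both sides of~\eqref{PDE weak} are bounded linear functionals of $v\in V$ — the right side because the trace $v\mapsto v(L)$ is $V$-continuous by Sobolev embedding, the left side because $u(t),u'(t)\in V$ for a.e.\ $t$ — so the $H^{1}(0,T;V)$ bounds above let me sum the termwise identities; $u(0)=0$ in $V$ follows from $b_{n}(0)=0$ and $H^{1}(0,T;V)\hookrightarrow C^{0}([0,T];V)$. For uniqueness, if $w$ is the difference of two weak solutions then $w\in H^{1}(0,T;V)$, $w(0)=0$, and~\eqref{PDE weak} holds with $P\equiv 0$; taking $v=w(t)$ and using that $t\mapsto\|w(t)\|_{H}^{2},\|w(t)\|_{V}^{2}$ are absolutely continuous with a.e.\ derivatives $2(w',w)_{H},2(w',w)_{V}$ (since $w,w'\in L^{2}(0,T;V)$) gives
\begin{equation*}
\frac{1}{2}\frac{d}{dt}\Big(\|w(t)\|_{H}^{2}+K_{0}\eta\|w(t)\|_{V}^{2}\Big)+K_{0}\mu\|w(t)\|_{V}^{2}=0\quad\text{a.e. in }(0,T),
\end{equation*}
and integrating from $0$ to $t$ forces $w\equiv 0$.

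The main obstacle is the summability analysis in the existence step: keeping track of the precise $\eta$-dependence of the Fourier coefficients and, above all, rigorously justifying term-by-term differentiation in time of the series under the sole hypothesis $P\in L^{2}(0,T)$ (as opposed to a smoother forcing), which is where Young's inequality for convolutions and the identity $(e_{n}\ast P)'=P-(\,\cdot\,)(e_{n}\ast P)$ do the decisive work. The termwise verification of~\eqref{PDE weak} and the uniqueness argument are then routine within the functional framework set up in the preceding subsection.
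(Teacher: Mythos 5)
Your proof is correct and follows essentially the same route as the paper: explicit Fourier coefficients $b_{n}(t)$ given by convolution with $e_{n}$, decay estimates in $n$ showing $u\in H^{1}(0,T;V)$, termwise verification of~\eqref{PDE weak} against the eigenfunctions $y_{n}$, and the identical energy identity for uniqueness. The only difference is cosmetic: you bound $e_{n}\ast P$ in $L^{2}(0,T)$ via Young's convolution inequality where the paper bounds it pointwise in $t$ via Cauchy--Schwarz, but both give the same summability in $n$ and isolate the same role of $\eta>0$.
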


\begin{proof}
For sake of exposition, we rewrite $u(x,t)$, given by (\ref{solution 1}), as
\begin{equation}
u\left( x,t\right) =\sum_{n=0}^{\infty }u_{n}\left( t\right) y_{n}\left(
x\right)   \label{u(t)}
\end{equation}%
where%
\begin{equation}
u_{n}\left( t\right) =-\frac{2K_{0}}{L}\frac{\left( -1\right) ^{n}}{%
1+K_{0}\eta \lambda _{n}}\exp \left( -\frac{K_{0}\mu \lambda _{n}}{%
1+K_{0}\eta \lambda _{n}}t\right) \ast P\left( t\right)   \label{u_n}
\end{equation}
so that%
\begin{equation}
\frac{\partial u}{\partial t}\left( x,t\right) =\sum_{n=0}^{\infty
}u_{n}^{\prime }\left( t\right) y_{n}\left( x\right)   \label{u'(t)}
\end{equation}
where
\begin{equation}
u_{n}^{\prime }\left( t\right) =-\frac{2K_{0}}{L}\frac{\left( -1\right) ^{n}%
}{1+K_{0}\eta \lambda _{n}}P\left( t\right) -\frac{K_{0}\mu \lambda _{n}}{%
1+K_{0}\eta \lambda _{n}}u_{n}\left( t\right) \label{u'_n}.
\end{equation}

\textbf{[Regularity].} Firstly, we show that $u(x,t)$ has the desired
regularity, namely $u\in H^{1}\left( 0,T;V\right) $ and $u\left( 0\right) =0$%
. With this goal in mind, we study the convergence of the series expansions
in (\ref{u(t)}) and (\ref{u'(t)}). For any $a>0$ Schwarz inequality implies that%
\begin{equation*}
\left\vert e^{-at}\ast P\left( t\right) \right\vert \leq \frac{1}{\sqrt{2a}}%
\left\Vert P\right\Vert _{L^{2}\left( 0,T\right) }.
\end{equation*}
Upon applying this estimate to (\ref{u_n}) and (\ref{u'_n}) we see that there are
positive constants, generically denoted by $C$,
that are independent on $n$ and $t$ and for which it holds that
\begin{equation}
\left\vert u_{n}\left( t\right) \right\vert \leq \frac{C}{\lambda _{n}}%
\left\Vert P\right\Vert _{L^{2}\left( 0,T\right) } \label{estimate u_n}
\end{equation}%
and%
\begin{equation}
\left\vert u_{n}^{\prime }\left( t\right) \right\vert \leq \frac{C}{\lambda
_{n}}\left( \left\vert P\left( t\right) \right\vert +\left\Vert P\right\Vert
_{L^{2}\left( 0,T\right) }\right) .\label{estimate u'_n}
\end{equation}%
Thus, $u,u^{\prime }\in L^{2}\left( 0,T;V^{s}\right) $  for every $s<3/2$,
thereby implying that  conditions (i) and (iii) of Def.~\ref{def weak sol} are satisfied.

\textbf{[Existence].} In order to show that $u$ satisfies condition (ii), it
suffices to consider $v=y_{n}$ as test function, since $\left\{ \sqrt{\frac{2%
}{L\lambda _{n}}}y_{n}\left( x\right) \right\} $ are a basis of $V$. Each
term on the left hand side of (\ref{PDE weak}) can be computed as
\begin{equation*}
\left( u^{\prime }\left( t\right) ,y_{n}\right) _{H}=\frac{L}{2}%
u_{n}^{\prime }\left( t\right)
\end{equation*}%
\begin{equation*}
\left( u\left( t\right) ,y_{n}\right) _{V}=\left( \dfrac{\partial u}{%
\partial x}\left( \cdot ,t\right) ,\dfrac{dy_{n}}{dx}\left( \cdot \right)
\right) _{H}=\frac{L}{2}\lambda _{n}u_{n}\left( t\right)
\end{equation*}%
\begin{equation*}
\left( u^{\prime }\left( t\right) ,y_{n}\right) _{V}=\left( \dfrac{\partial
^{2}u}{\partial t\partial x}\left( \cdot ,t\right) ,\dfrac{dy_{n}}{dx}\left(
\cdot \right) \right) _{H}=\frac{L}{2}\lambda _{n}u_{n}^{\prime }\left(
t\right) \,.
\end{equation*}%
Adding the above three terms, we obtain that%
\begin{equation*}
\left( u^{\prime }\left( t\right) ,y_{n}\right) _{H}+K_{0}\left( \mu u\left(
t\right) +\eta u^{\prime }\left( t\right) ,y_{n}\right) _{V}=\frac{L}{2}%
\left[ \left( 1+K_{0}\eta \lambda _{n}\right) u_{n}^{\prime }\left( t\right)
+K_{0}\mu \lambda _{n}u_{n}\left( t\right) \right]\,.
\end{equation*}%
Thus, from (\ref{u'_n}) and the fact that
$y_{n}\left( L\right) =\left( -1\right) ^{n}$, it follows that
the right hand side is equal to $-K_{0}P\left( t\right) y_{n}\left( L\right)
$, and this completes the proof of existence.

[\textbf{Uniqueness}] From the linearity of the equation, it suffices to
show that $P=0$ implies $u=0$. Let $P=0$ and let $u\in H^{1}\left(
0,T;V\right) $ be a solution of%
\begin{equation*}
\left( u^{\prime }\left( t\right) ,v\right) _{H}+K_{0}\left( \mu u\left(
t\right) +\eta u^{\prime }\left( t\right) ,v\right) _{V}=0,\qquad v\in V\,.
\end{equation*}%
If we choose $v=u\left( t\right) $ as a test function, then we obtain that
\begin{equation*}
\left( u^{\prime }\left( t\right) ,u\left( t\right) \right) _{H}+K_{0}\mu
\left\Vert u\left( t\right) \right\Vert _{V}^{2}+K_{0}\eta \left( u^{\prime
}\left( t\right) ,u\left( t\right) \right) _{V}=0
\end{equation*}%
implying that
\begin{equation*}
\frac{1}{2}\frac{d}{dt}\left( \left\Vert u\left( t\right) \right\Vert
_{H}^{2}+K_{0}\eta \left\Vert u\left( t\right) \right\Vert _{V}^{2}\right)
=-K_{0}\mu \left\Vert u\left( t\right) \right\Vert _{V}^{2}\leq 0\,.
\end{equation*}%
Integrating with respect to time and using the initial condition $u\left(
0\right) =0$, we get%
\begin{equation*}
\left\Vert u\left( t\right) \right\Vert _{H}^{2}+K_{0}\eta \left\Vert
u\left( t\right) \right\Vert _{V}^{2}\leq 0
\end{equation*}%
Hence $u\left( t\right) =0$ for every $t$ in $\left[ 0,T\right] $.
\end{proof}

\begin{remark}
From (\ref{estimate u_n}), the fact that $\left\vert y_{n}\left( x\right) \right\vert \leq 1$, and the
standard Weierstrass Test, it follows that the series (\ref{u(t)}) converges absolutely
and uniformly in the space-time rectangle $Q=\left[ 0,L\right] \times \left[ 0,T\right] $
and its sum is continuous, i.e. $u\left( x,t\right) \in C^{0}\left( Q\right)
$. In a similar way, under the stronger assumption $P\left( t\right) \in
L^{\infty }\left( 0,T\right) $, estimate (\ref{estimate u'_n}) implies $\left\vert
u_{n}^{\prime }\left( t\right) \right\vert \leq C\lambda _{n}^{-1}\left\Vert
P\right\Vert _{L^{\infty }\left( 0,T\right) }$ so that  $\frac{%
\partial u}{\partial t}\left( x,t\right) \in C^{0}\left( Q\right) $ also holds. Thus,
in the physically realistic case of bounded boundary traction $P\left(
t\right) $, discharge velocity and fluid power density are continuous and
bounded both in space and time, and this is true even though the
traction has jump discontinuities (e.g., it is a step pulse).
\end{remark}

{\bf Case 2.} ($\eta =0$). In the purely elastic problem, the formal solution (\ref{solution 2_bis}) is again
written as given in (\ref{u(t)}) where now%
\begin{equation*}
u_{n}\left( t\right) =-\frac{2K_{0}}{L}\left( -1\right) ^{n}\exp \left(
-K_{0}\mu \lambda _{n}t\right) \ast P\left( t\right).
\end{equation*}%
Then, estimates (\ref{estimate u_n}) and (\ref{estimate u'_n}) become, respectively,%
\begin{equation*}
\left\vert u_{n}\left( t\right) \right\vert \leq \frac{C}{\sqrt{\lambda _{n}}%
}\left\Vert P\right\Vert _{L^{2}\left( 0,T\right) }
\end{equation*}%
and%
\begin{equation*}
\left\vert u_{n}^{\prime }\left( t\right) \right\vert \leq C\sqrt{\lambda
_{n}}\left( \left\vert P\left( t\right) \right\vert +\left\Vert P\right\Vert
_{L^{2}\left( 0,T\right) }\right).
\end{equation*}%
As a consequence, it can only be asserted that $u\in L^{2}\left( 0,T;V^{s}\right) $
for every $s<1/2$ and $u^{\prime }\in L^{2}\left( 0,T;V^{s}\right) $ for
every $s<-3/2$. On the other hand, if one assumes $P\left( t\right) \in
L^{\infty }\left( 0,T\right) $, we find that%
\begin{equation*}
\left\vert u_{n}\left( t\right) \right\vert \leq \frac{C}{\lambda _{n}}%
\left\Vert P\right\Vert _{L^{\infty }\left( 0,T\right) }
\end{equation*}%
and%
\begin{equation*}
\left\vert u_{n}^{\prime }\left( t\right) \right\vert \leq C\left\Vert
P\right\Vert _{L^{\infty }\left( 0,T\right) }.
\end{equation*}%
Hence, in this case, $u\in L^{2}\left( 0,T;V^{s}\right) $ for every $s<3/2$
and $u^{\prime }\in L^{2}\left( 0,T;V^{s}\right) $ for every $s<-1/2$. In
particular, when the boundary traction is bounded, we have $u\left(
x,t\right) \in C^{0}\left( Q\right) $ whereas the discharge velocity is a
distribution. In conclusion, we state the following definition and theorem (without proof):

\begin{definition}
\label{def weak sol eta=0}A function $u:\left[ 0,T\right] \rightarrow V$ is a weak
solution to problem~\eqref{eq:1d_problem_per_u_eta =0} if:

\begin{description}
\item[(i)] $u\in L^{2}\left( 0,T;V\right) $, $u^{\prime }\in L^{2}\left(
0,T;V^{\prime }\right) $;

\item[(ii)] for every $v\in V$ and for $t$ pointwise a.e. in $\left[ 0,T%
\right] $,
\begin{equation}
\left\langle u^{\prime }\left( t\right) ,v\right\rangle +K_{0}\mu \left(
u\left( t\right) ,v\right) _{V}=-K_{0}P\left( t\right) v\left( L\right)
\label{PDE weak eta=0}
\end{equation}
where the brackets $\left\langle \cdot ,\cdot \right\rangle $ denote pairing between $V^{\prime }$ and $V$;

\item[(iii)] $u\left( 0\right) =0.$
\end{description}
\end{definition}

\begin{theorem}
Suppose $P(t)\in L^{\infty }(0,T)$. Then there exists a unique weak solution $u$
to problem~\eqref{eq:1d_problem_per_u_eta =0}, in the sense of Definition \ref{def weak sol eta=0}. Moreover, $u$
is represented by the series expansion~\eqref{solution 2_bis}.
\end{theorem}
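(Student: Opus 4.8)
The plan is to follow the same three-step scheme (regularity, existence, uniqueness) used in the proof of the viscoelastic theorem, adapting it to the one feature that distinguishes the purely elastic case: here $u'$ belongs only to the weaker space $V'=V^{-1}$, so the first term of \eqref{PDE weak eta=0} must be read as a $V'$--$V$ duality pairing rather than as an $H$--inner product.

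First I would take as candidate the series \eqref{solution 2_bis}, written as $u(x,t)=\sum_{n\ge 0}u_n(t)\,y_n(x)$ with $u_n(t)=-\tfrac{2K_0}{L}(-1)^n\,e^{-K_0\mu\lambda_n t}\ast P(t)$, and record the scalar ODE it satisfies term by term, namely $u_n'(t)+K_0\mu\lambda_n u_n(t)=-\tfrac{2K_0}{L}(-1)^n P(t)$ with $u_n(0)=0$. For the regularity step, since $P\in L^\infty(0,T)$ the explicit convolution gives $|u_n(t)|\le \tfrac{C}{\lambda_n}\|P\|_{L^\infty(0,T)}$, and then, using $|P(t)|\le\|P\|_{L^\infty(0,T)}$ for a.e.\ $t$ together with the bound just obtained for $\lambda_n u_n(t)$, also $|u_n'(t)|\le C\|P\|_{L^\infty(0,T)}$, the constant $C$ being independent of $n$ and $t$. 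Since $\lambda_n\sim n^2$, summing $\lambda_n|u_n(t)|^2$ and $\lambda_n^{-1}|u_n'(t)|^2$ over $n$ gives $u\in L^\infty(0,T;V)\subset L^2(0,T;V)$ and $u'\in L^\infty(0,T;V')\subset L^2(0,T;V')$; the bound on $u_n$ also yields, by the Weierstrass test just as in the $\eta>0$ case, $u\in C^0(Q)$, so that the initial condition $u(0)=0$ holds (each $u_n(0)=0$). This settles conditions (i) and (iii) of Definition~\ref{def weak sol eta=0}, and it is precisely at this point that the hypothesis $P\in L^\infty(0,T)$ (and not merely $L^2$) is used, since $u_n'(t)$ contains $P(t)$ undifferentiated and must be controlled uniformly in $n$.

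For existence it is enough, by linearity and the fact that $\{\sqrt{2/(L\lambda_n)}\,y_n\}$ is a basis of $V$, to verify \eqref{PDE weak eta=0} for $v=y_m$. Using $\langle u'(t),y_m\rangle=\tfrac{L}{2}u_m'(t)$ and $(u(t),y_m)_V=\left(\tfrac{\partial u}{\partial x}(\cdot,t),\tfrac{dy_m}{dx}\right)_H=\tfrac{L}{2}\lambda_m u_m(t)$, the left-hand side of \eqref{PDE weak eta=0} collapses to $\tfrac{L}{2}\bigl(u_m'(t)+K_0\mu\lambda_m u_m(t)\bigr)$, which by the term-by-term ODE equals $-K_0P(t)(-1)^m=-K_0P(t)\,y_m(L)$, since $y_m(L)=(-1)^m$ by \eqref{lambda_y_n}; this is the right-hand side. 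For uniqueness I would argue, by linearity, that $P=0$ forces $u=0$: testing against $v=u(t)$ gives $\langle u'(t),u(t)\rangle+K_0\mu\|u(t)\|_V^2=0$, and invoking the standard chain rule for the Gelfand triple $V\hookrightarrow H\hookrightarrow V'$ (legitimate because $u\in L^2(0,T;V)$ and $u'\in L^2(0,T;V')$) to write $\langle u'(t),u(t)\rangle=\tfrac12\tfrac{d}{dt}\|u(t)\|_H^2$, one obtains $\tfrac{d}{dt}\|u(t)\|_H^2\le0$; integrating from $0$ and using $u(0)=0$ yields $u\equiv0$.

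The step I expect to require the most care -- and the only real difference from the $\eta>0$ argument -- is the uniqueness estimate, because the dissipation now controls only $\|u\|_V$ (rather than $\|u\|_H^2+K_0\eta\|u\|_V^2$ as in the viscoelastic case), so the conclusion hinges entirely on the duality identity $\langle u'(t),u(t)\rangle=\tfrac12\tfrac{d}{dt}\|u(t)\|_H^2$; this is the Lions--Aubin--Magenes lemma for $V\hookrightarrow H\hookrightarrow V'$ and I would quote it rather than reprove it. A secondary, purely bookkeeping concern is to fix once and for all the identification of $V'$ as the completion of $H$ in the norm $\bigl(\sum\lambda_n^{-1}|c_n|^2\bigr)^{1/2}$, so that the pairing formula $\langle u'(t),y_m\rangle=\tfrac{L}{2}u_m'(t)$ used in the existence step and the energy identity used in the uniqueness step are consistent with one another.
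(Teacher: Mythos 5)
Your proposal is correct and follows exactly the route the paper intends: the paper states this theorem explicitly ``without proof,'' but the $L^{\infty}$ estimates $|u_n(t)|\le C\lambda_n^{-1}\|P\|_{L^{\infty}(0,T)}$ and $|u_n'(t)|\le C\|P\|_{L^{\infty}(0,T)}$ that you use for the regularity step are precisely the ones derived in the text immediately preceding the statement, and your existence and uniqueness steps mirror the paper's proof of the $\eta>0$ theorem. The one adaptation you add --- invoking the Lions--Magenes chain rule $\langle u'(t),u(t)\rangle=\tfrac12\tfrac{d}{dt}\|u(t)\|_H^2$ for the Gelfand triple, since $u'$ now lives only in $V'$ --- is exactly the right (and necessary) modification of the uniqueness argument.
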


%%%%%%%%%%%%%%%%%%%%%%%%%%%%%%
\section{Dimensionless Problem}\label{sec:dimensionless_problem}
%%%%%%%%%%%%%%%%%%%%%%%%%%%%%%

%{\color{blue} All the references here need to be adjusted, as they were based on the previous version}\\
The goal of this section is to rewrite problem \eqref{eq:1d_problem_per_u} in dimensionless form so that we can identify combinations of geometrical and physical parameters that most influence the solution properties. Dimensional analysis relies on the choice of a set of characteristic values that can be used to scale all the problem variables. Let us use the hat symbol to indicate dimensionless (or scaled) variables and the square brackets to indicate the characteristic value of that quantity. Then, for the problem at hand we would write:
\begin{equation} \label{starred variables}
\begin{array}{c}
\displaystyle\hx =\frac{x}{[x]},\qquad \hatt=\frac{t}{[t]},\qquad
\heta=\frac{\eta}{[\eta]}, \qquad \hlambda _{n} =\frac{\lambda _{n}}{[\lambda _{n}]},  \\[.1in]
\displaystyle\hat{P} = \frac{P}{[P]}, \qquad   \hu =\frac{u}{[u]},\qquad \hv=\frac{v}{[v]}, \qquad
 \hat{\mathcal{P}} =\frac{\mathcal P}{[\mathcal{P}]} \,.
\end{array}
\end{equation}
It is important to emphasize that there is no trivial choice for the characteristic values and, in general, this choice is not unique. In this particular case, though, we will leverage our knowledge of the forcing terms and the explicit formulas we obtained for the solution to guide us in the choice of some of these values. Since the problem is driven by the boundary condition on the traction with the given function $P(t)$, we set
\begin{equation}
[P]=P_{\text{ref}}
\end{equation}
where $P_{\text{ref}} $ is a reference value, for example the mean value, of the given function $P(t)$.
The expression for $\lambda_n$ in \eqref{lambda_y_n} suggests to choose
\begin{equation}
[\lambda_n]=\frac{1}{L^2} \quad \mbox{and} \quad [x]=L
\end{equation}
and, consequently, the expression for $u(x,t)$ in~\eqref{solution 1} suggests
to choose
\begin{equation}\label{eq:scalf_eta}
[\eta] = \frac{1}{K_0[\lambda_n]} = \frac{L^2}{K_0}, \quad
[t] =  \frac{1}{K_0\mu[\lambda_n]} = \frac{L^2}{K_0\mu},
\quad
[u] = \frac{K_0}{L}[P][t] = \frac{P_{\text{ref}} L}{\mu}
\,,
\end{equation}
whereas the expressions for $v(x,t)$ in~\eqref{v} and $\mathcal P(t)$ in~\eqref{power} suggest to choose
\begin{equation}\label{eq:scalf_velocity}
[v] = \frac{[u]}{[t]} =  \frac{K_0}{L}P_{\text{ref}}
\end{equation}
and
\begin{equation}\label{eq:scalf_power_density}
[\mathcal P] = \frac{L}{K_0}[v]^2 =  \frac{K_0}{L}P_{\text{ref}}^2
\end{equation}
respectively. Using the above scalings, we obtain the following dimensionless problem:
\begin{subequations}
\label{eq:1d_problem_per_hu}
\begin{align}
\dfrac{\partial \hu}{\partial \hatt}- \dfrac{\partial ^{2} \hu}{\partial \hx ^{2}}%
-\heta \dfrac{\partial ^{3}\hu}{\partial \hatt\partial \hx ^{2}}=0 &  & & %
\mbox{in}\ (0,1)\times (0,\hT) \label{eq:balance_for_hu} \\[3mm]
 \dfrac{\partial \hu}{\partial \hx}\left( 1,\hatt\right) +\heta \dfrac{\partial
^{2}\hu}{\partial \hatt\partial \hx}\left( 1, \hatt\right) =-\hP\left( \hatt\right)
& & &
\mbox{for}\;0<\hatt<\hT \label{eq:bc_hx_1} \\
\hu\left( 0,\hatt\right) =0 & & & \mbox{for}\;0<\hatt<\hT \label{eq:bc_hx_0} \\
\hu\left( \hx,0\right) =0 & & & \mbox{for}\;0<\hx<1 \label{eq:ic h}
\end{align}%
\end{subequations}
where $\hT = T/[t]$ and $\heta \geq 0$. Recalling again the expressions~\eqref{solution 1},~\eqref{v} and~\eqref{power}, we can now write solid displacement, discharge velocity and power density in dimensionless form as
\begin{equation}
\hu\left( \hx,\hatt\right) =-2\sum_{n=0}^{\infty }\frac{\left(
-1\right) ^{n}y_{n}\left( \hx\right) }{1+\heta \hlambda _{n}}\exp \left( -%
\frac{ \hlambda _{n}}{1+\heta \lambda _{n}}\hatt\right) \ast \hP\left(
\hatt\right)   \label{solution 1hat}
\end{equation}%
\begin{equation} \label{hv}
\hv\left( \hx,\hatt\,\right) =2\sum_{n=0}^{\infty }\frac{\left(
-1\right) ^{n}y_{n}\left( \hx\right) }{1+\heta \hlambda _{n}} \left\{ \hat{P}\left( \hatt\,\right) -\frac{%
\hlambda _{n}}{1+\heta \hlambda _{n}}\exp \left( -%
\frac{ \hlambda _{n}}{1+\heta \lambda _{n}}\hatt\right) \ast \hP\left(
\hatt\right)\right\}
\end{equation}%
\begin{equation}\label{power hat}
\hat{\mathcal{P}}\left( \hatt\right) =2\sum_{n=0}^{\infty }\frac{1}{\left(
1+\heta \hlambda _{n}\right) ^{2}}\left\{ \hat{P}\left( \hatt\,\right) -\frac{\hlambda _{n}%
}{1+\heta \hlambda _{n}}\exp \left( -%
\frac{ \hlambda _{n}}{1+\heta \lambda _{n}}\hatt\right) \ast \hP\left(
\hatt\right) \right\} ^{2}\,.
\end{equation}

\begin{remark}
As already mentioned above, the choice for the characteristic values is not unique. In this regard, it is worth noticing that our choice for $[v]$, namely $K_0 P_{\text{ref}}/L$, differs from that utilized in~\cite{Soltz1998}, which was $K_0 \mu/L$ instead. This difference actually follows from a different choice for the pressure scaling, namely $P_{\text{ref}}$ in our case, as opposed to $\mu$ in~\cite{Soltz1998}. Indeed, the two scalings are equivalent if $P_{\text{ref}}$  and $\mu$ are of the same order of magnitude. In general, though, the scaling $K_0 \mu/L$ represents the characteristic velocity of wave propagation within the medium, whereas the scaling $K_0 P_{\text{ref}}/L$ represents the characteristic velocity induced by an external load of magnitude $P_{\text{ref}}$. Since our analysis aims at assessing the influence of the external load on the biomechanical response of the medium, we chose $[P]=P_{\text{ref}}$. However, the analysis could be easily adapted to other choices, should the scope of the investigation differ from that of this paper.
\end{remark}

\section{The case $\hat{P}\left( \hatt\,\right) =\mathrm{step}$ $\mathrm{pulse}$}
\label{sec:step_pulse}
% 
% {\color{red}I remember we had a remark in a previous version regarding the shift to the right of the step pulse, and how things can be adjusted to that. For me, it's weird that the data has the discontinuity at initial time $t=0$, and then show blow up of velocity at 0, as I think of the formulas (18) and (19) on $t>0$, and the data is also usually given for $t>0$. I think it would be nice to add the remark back in.} {\color{magenta}[MAU] See the coloured remark below.}
% 
Let the boundary traction $\hat{P}\left( \hatt\,\right) $ be the dimensionless unit step starting at
$\hatt=0$ defined as
\begin{equation}\label{eq:H}
\hat{P}\left( \hat{t}\,\right) =H\left( \hat{t}\,\right) =\left\{
\begin{array}{lll}
0 & \text{if} & \hat{t}<0 \\
1 & \text{if} & \hat{t}\geq 0%
\end{array}%
\right. \,.
\end{equation}%
 %---------------------------------------------------
% tikz for illustration of step pulse
%---------------------------------------------------
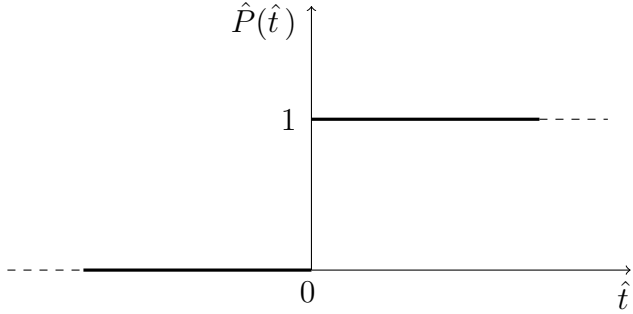
\begin{figure}[h!]
\begin{center}
\begin{tikzpicture}
%origin
\node[below] at (-0.05,0) {$0$};
% x-axis -- negative
\draw[dashed] (-4,0) -- (-3,0);
% x-axis -- positive
\draw[->] (0,0) -- (4.2,0);
\node[below] at (4.1,0) {$\hatt$};
% y-axis
\draw[->] (0,0) -- (0,3.5);
\node[left] at (-0.05,3.3) {$\hat{P}(\hat{t}\,)$};
%pulse
\draw[very thick] (-3,0) -- (0,0);
\draw[very thick] (0,2) -- (3,2);
\draw[dashed] (3,2) -- (3.9,2);
%dashed lines and notation
\node[left] at (-0.05,2) {$1$};
\end{tikzpicture}
\end{center}
\caption{Schematic representation of the dimensionless step pulse $\hat{P}(\hatt\,)$ defined in~\eqref{eq:H}. Here, the signal is discontinuous at the switch on time.}
\label{fig:step_pulse}
\end{figure}
%---------------------------------------------------
In this case, the dimensionless solid displacement \eqref{solution 1hat}, discharge velocity \eqref{hv} and
power density \eqref{power hat} (hereon denoted with the subscript $\heta$ for later reference) become,
respectively,
\begin{subequations}
\label{eq:solution_hat_step_pulse}
\begin{align}
\hu_{\heta}\left( \hx,\hatt\,\right) & =-2\sum_{n=0}^{\infty }\frac{\left(
-1\right) ^{n}}{\hlambda_{n}}\left\{ 1-\exp \left( -\frac{\hlambda_{n}\hatt}{%
1+\heta\hlambda_{n}}\right) \right\} y_{n}\left( \hx\right)  & &  \notag \\
& =-\hx+2\sum_{n=0}^{\infty }\frac{\left( -1\right) ^{n}}{\hlambda_{n}}\exp
\left( -\frac{\hlambda_{n}\hatt}{1+\heta\hlambda_{n}}\right) y_{n}\left( \hx%
\right)  & &  \label{u hat step} \\
\hv_{\heta}\left( \hx,\hatt\right) & =2\sum_{n=0}^{\infty }\frac{\left(
-1\right) ^{n}}{1+\heta\hlambda_{n}}\exp \left( -\frac{\hlambda_{n}\hatt}{1+%
\heta\hlambda_{n}}\right) y_{n}\left( \hx\right)  & & \\
\hat{\mathcal{P}}_{\heta}\left( \hatt\right) & =2\sum_{n=0}^{\infty }\frac{1%
}{\left( 1+\heta\hlambda_{n}\right) ^{2}}\exp \left( -\frac{2\hlambda_{n}%
\hatt}{1+\heta\hlambda_{n}}\right) \,. & &  \label{power hat step}
\end{align}
\end{subequations}
The solution in the purely elastic case can be obtained by setting
$\heta =0$ in~\eqref{eq:solution_hat_step_pulse}.
\begin{remark}
If the unit step is shifted at $\hat{t}=\alpha >0$, i.e. $\hat{P}\left( \hat{%
t}\right) =H\left( \hat{t}-\alpha \right) $, then the solution \eqref{solution 1hat} is%
\begin{equation*}
\hat{u}\left( \hat{x},\hat{t}\right) =\hat{u}_{\hat{\eta}}\left( \hat{x},%
\hat{t}-\alpha \right) H\left( \hat{t}-\alpha \right) =\left\{
\begin{array}{lll}
0 & \text{if} & 0\leq \hat{t}<\alpha  \\
\hat{u}_{\hat{\eta}}\left( \hat{x},\hat{t}-\alpha \right)  & \text{if} &
\hat{t}\geq \alpha.
\end{array}%
\right.
\end{equation*}
\end{remark}
The space-time behavior of $\hu_{\heta}$, $\hv_{\heta}$ and $\hat{\mathcal P}_{\heta}$ is reported in Figures~\ref{fig:displacement_step_pulse},~\ref{fig:discharge_velocity_step_pulse}
and~\ref{fig:power_density_step_pulse}, respectively, in the case of $\heta=0$ and $\heta=1$.
We remark that $\hv_{\heta}$ and
$\hat{\mathcal P}_{\heta}$ are reported in logarithmic scale to highlight
the presence of a blow-up at $\hat{x}=1$ and $\hatt=0$ in the purely elastic case $\heta=0$.
Indeed, by setting
$\heta=\hatt=0$ in~\eqref{power hat step}, we can verify that the power density satisfies
$$
\hat{\mathcal{P}}_{0}\left( 0 \right) =2\sum_{n=0}^{\infty } 1 = +\infty.
$$
Proceeding analogously in the case of the discharge velocity, the Fourier expansion
in the purely elastic case is given by
\begin{equation*}
\hat{v}_{0}\left( \hat{x},0\right) =2\sum_{n=0}^{\infty }\left( -1\right)
^{n}y_{n}\left( \hat{x}\right)
\end{equation*}%
which clearly lacks pointwise convergence for any $\hat{x}\neq 0$.

\begin{figure}[h!]
\begin{center}
\begin{tikzpicture}
% graph
\node at (0,0)
{\includegraphics[scale=0.35]{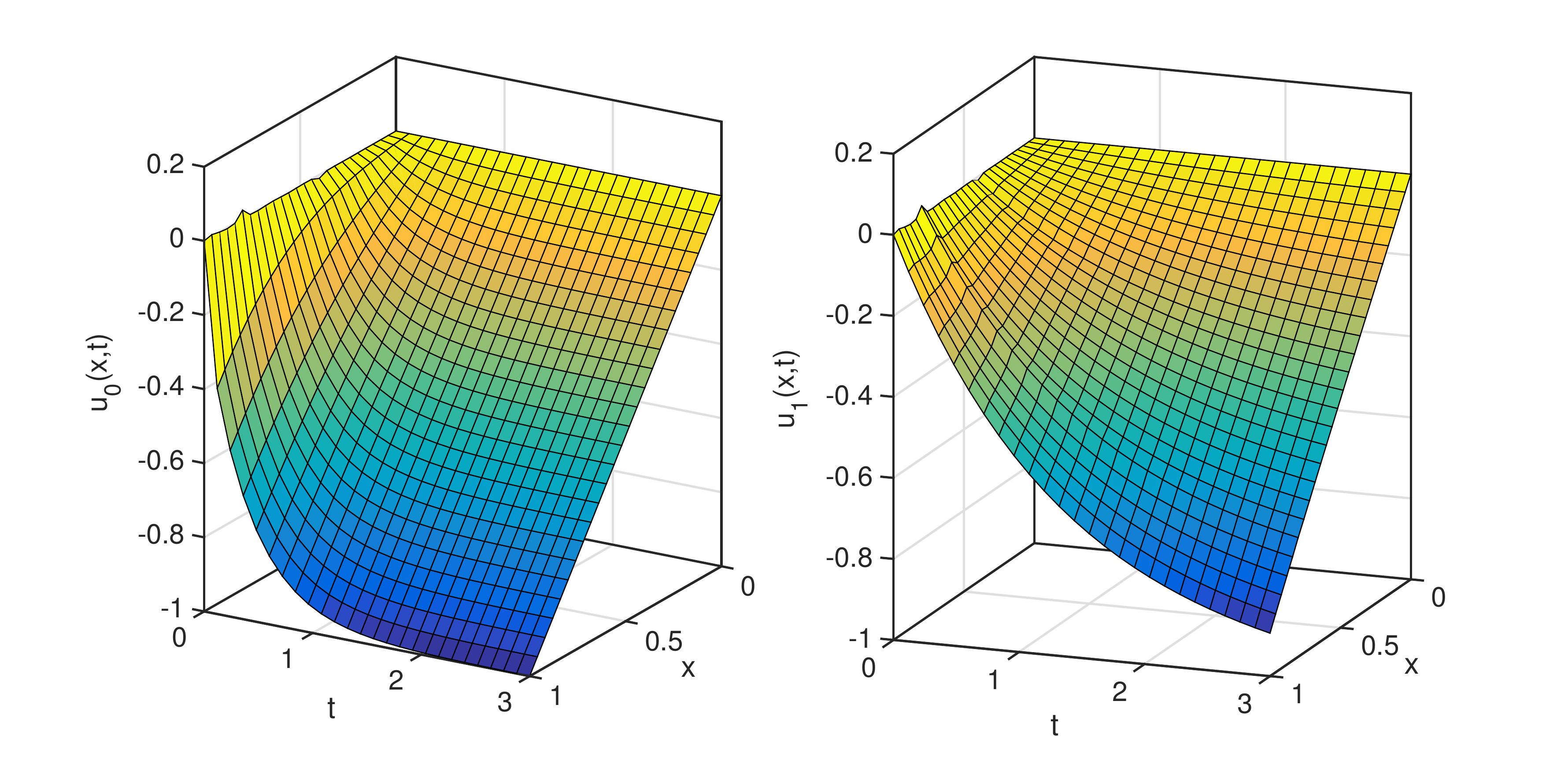}};
% x labels
\fill [white] (-1,-2.5) rectangle (-0.5,-2);
\fill [white] (5,-2.5) rectangle (5.5,-2);
\node [right] at (-1,-2.3) {$\hat{x}$};
\node [right] at (5,-2.3) {$\hat{x}$};
% t labels
\fill [white] (-4,-3) rectangle (-3.5,-2.5);
\fill [white] (2,-3) rectangle (2.5,-2.5);
\node [right] at (-4,-2.7) {$\hat{t}$};
\node [right] at (2,-2.7) {$\hat{t}$};
% z labels
\fill [white] (-5.8,-0.5) rectangle (-5.3,1);
\node [right,rotate=90] at (-5.5,-0.5) {$\hat{u}_0(\hat{x},\hat{t}\,)$};
\fill [white] (-0.2,-0.5) rectangle (0.3,1);
\node [right,rotate=90] at (0.1,-0.5) {$\hat{u}_1(\hat{x},\hat{t}\,)$};
\end{tikzpicture}
\end{center}
\caption{Dimensionless displacement $\hu_{\heta}$ as a function of $\hx$ and $\hatt$.
Left panel: $\heta=0$. Right panel: $\heta=1$.}
\label{fig:displacement_step_pulse}
\end{figure}

\begin{figure}[h!]
\begin{center}
\begin{tikzpicture}
% graph
\node at (0,0)
{\includegraphics[scale=0.35]{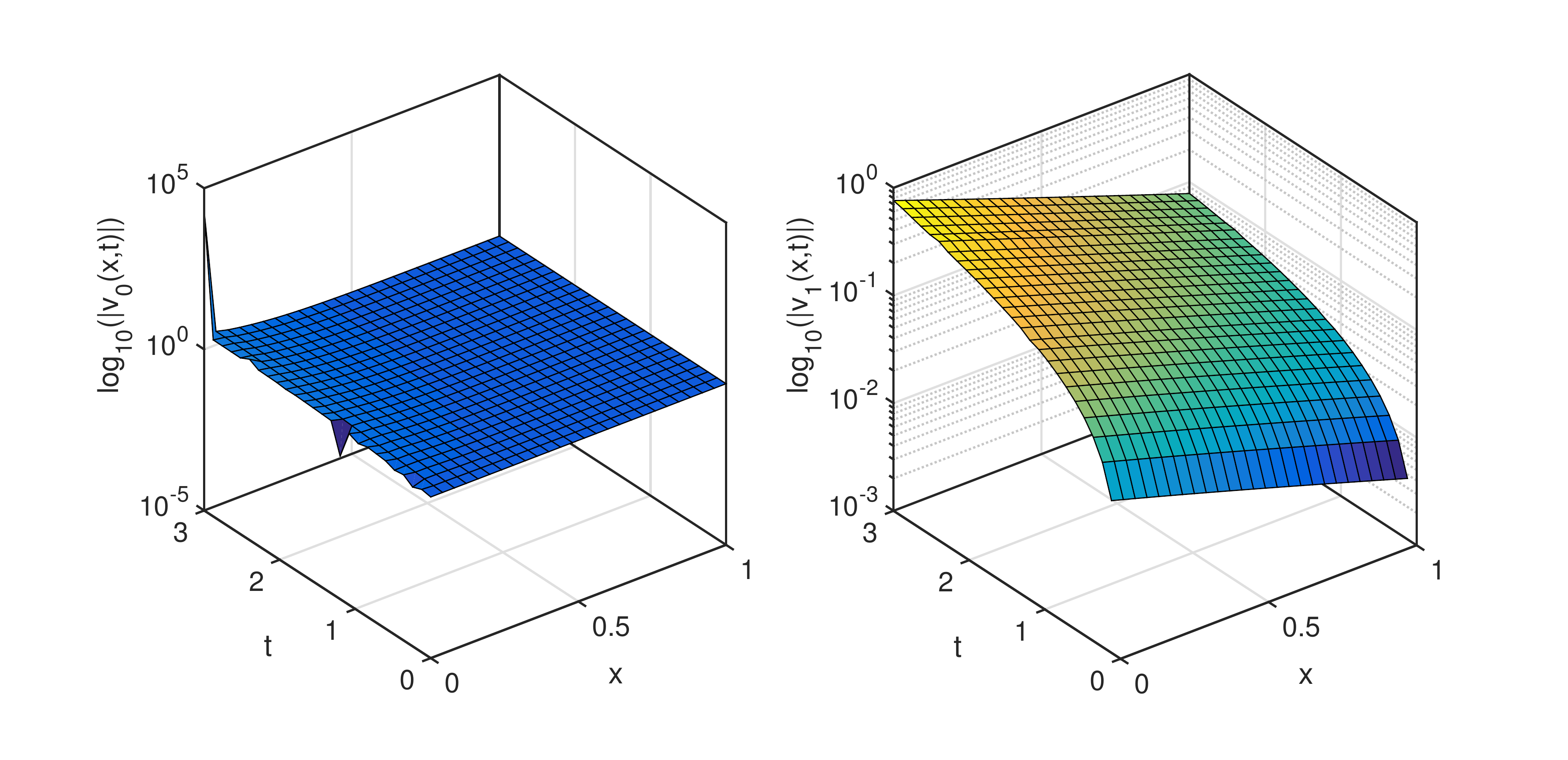}};
% x-axis
\fill [white] (-2,-2.7) rectangle (-1,-2.2);
\node [right] at (-1.9,-2.4) {$\hat{x}$};
\fill [white] (4,-2.7) rectangle (5,-2.2);
\node [right] at (3.7,-2.4) {$\hat{x}$};
% y-axis
\fill [white] (-4,-2.5) rectangle (-4.5,-2);
\node [right] at (-4.2,-2.2) {$\hat{t}$};
\fill [white] (1,-2.5) rectangle (1.5,-2);
\node [right] at (1.2,-2.2) {$\hat{t}$};
% z-axis
\fill [white] (-5.7,-1) rectangle (-5.2,2);
\node [right,rotate=90] at (-5.5,-1) {$\log_{10}(|\hat{v}_0(\hat{x},\hat{t})|)$};
\fill [white] ( -0.2,-1) rectangle (0.3,2);
\node [right,rotate=90] at (0.1,-1) {$\log_{10}(|\hat{v}_1(\hat{x},\hat{t})|)$};
\end{tikzpicture}
\end{center}
\caption{Dimensionless discharge velocity $\hv_{\heta}$ as a function of $\hx$ and $\hatt$.
Left panel: $\heta=0$. Right panel: $\heta=1$. In order to highlight the velocity blow-up at
$\hx=1$, $\hatt=0$, the $\log_{10}$ plot of $|\hv_{\heta}|$ is plotted in both panels. }
\label{fig:discharge_velocity_step_pulse}
\end{figure}

\begin{figure}[h!]
\begin{center}
\begin{tikzpicture}
% graph
\node at (0,0)
{\includegraphics[scale=0.35]{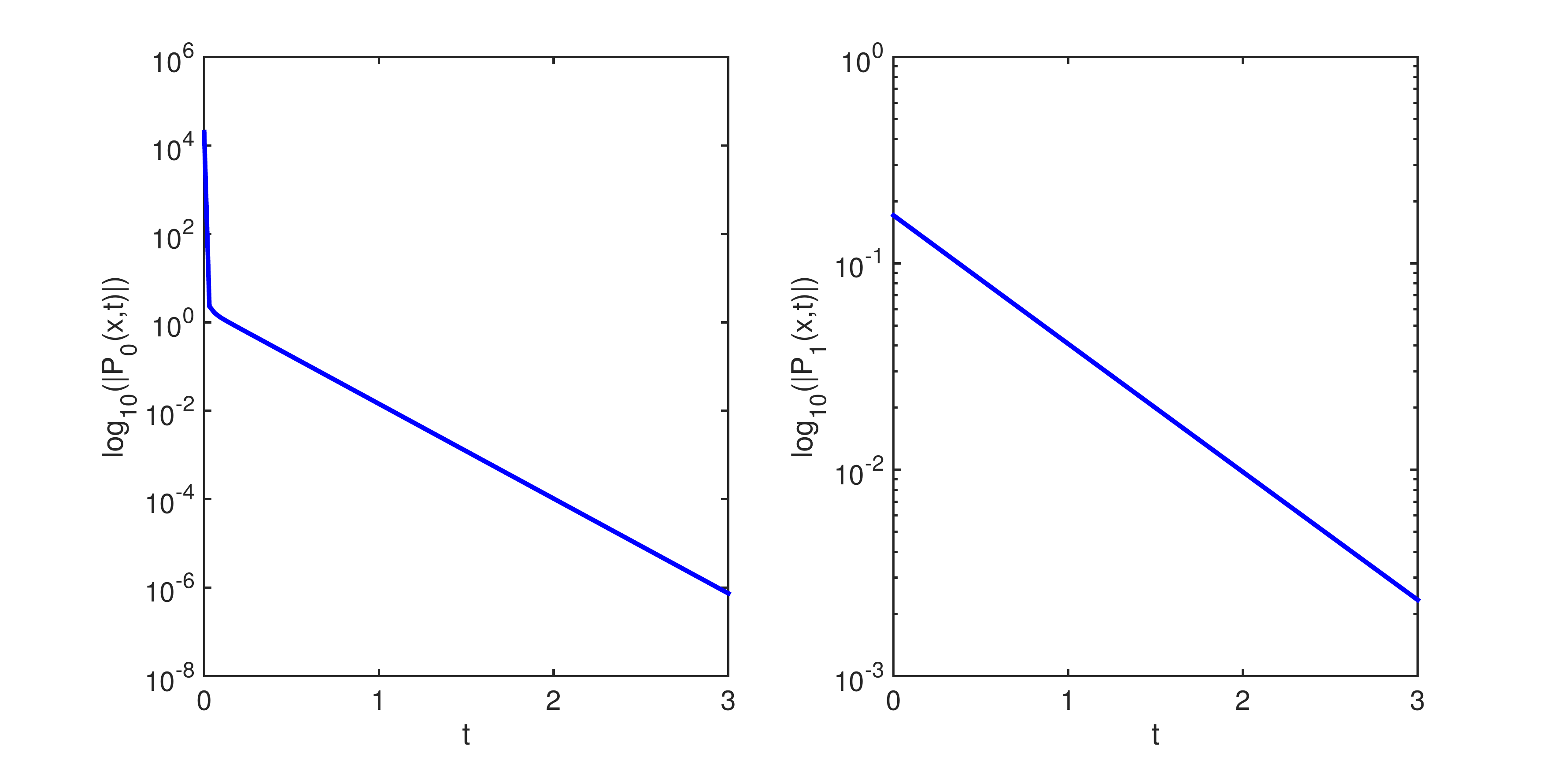}};
% x-axis
\fill [white] (-3.3,-3.1) rectangle (-2.2,-2.7);
\node [right] at (-2.7,-2.9) {$\hat{t}$};
\fill [white] (2.2,-3.1) rectangle (3.3,-2.7);
\node [right] at (2.7,-2.9) {$\hat{t}$};
% y-axis
\fill [white] (-5.7,-1) rectangle (-5.2,1);
\node [right,rotate=90] at (-5.5,-1) {$\log_{10}(\hat{P}_0(\hat{t}\,))$};
\fill [white] (-0.2,-1) rectangle (0.35,1);
\node [right,rotate=90] at (0.1,-1) {$\log_{10}(\hat{P}_1(\hat{t}\,))$};
\end{tikzpicture}
\end{center}
\caption{Dimensionless power density $\hat{\mathcal{P}}_{\heta}$ as a function of $\hatt$. Left panel: $\heta=0$. Right panel: $\heta=1$. In order to highlight the power density blow-up at $\hatt=0$, the $\log_{10}$ plot of $\hat{\mathcal{P}}_{\heta}$ is plotted in both panels.}
\label{fig:power_density_step_pulse}
\end{figure}

From the physical viewpoint, this means that,
at the switch on time of the driving term, here set at $\hatt=0$, the discharge velocity contains high-frequency
components whose superposition exhibits the following behaviors depending on $\hat{x}$:
(i) it relaxes
towards zero as $\hat{x}\rightarrow 0$; (ii) it tends to infinity as $\hat{x}%
\rightarrow 1$, thereby exhibiting a blow-up; (iii) it looks like an
amorphous blob for $0<\hat{x}<1$.
On the other hand, if viscoelastic effects are present, then the dimensionless discharge velocity at $\hatt=0$ is given by
\begin{equation*}
\hat{v}_{\hat{\eta}}\left( \hat{x},0\right) =2\sum_{n=0}^{\infty }\frac{%
\left( -1\right) ^{n}}{1+\hat{\eta}\hat{\lambda}_{n}}y_{n}\left( \hat{x}%
\right)\,.
\end{equation*}%
Here, the $n-$th coefficient is decreasing as $1/n^{2}$ and the discharge
velocity is continuous at $\hat{t}=0$. These concepts are illustrated in Fig.~\ref{fig:discharge_velocity_t_0}, which shows the spatial distribution of $\hat{v}_{\hat{\eta}}$ at $\hatt=0$ for several values of $\hat{\eta}$
in the interval $[0,1]$. The $\log_{10}$ plot of $\hat{v}_{\hat{\eta}}$ is reported in the figure to highlight
the blow-up of the discharge velocity in the purely elastic case at $\hat{x}=1$.
\begin{figure}[h!]
\begin{center}
\begin{center}
\begin{tikzpicture}
% graph
\node at (0,0)
{\includegraphics[scale=0.35]{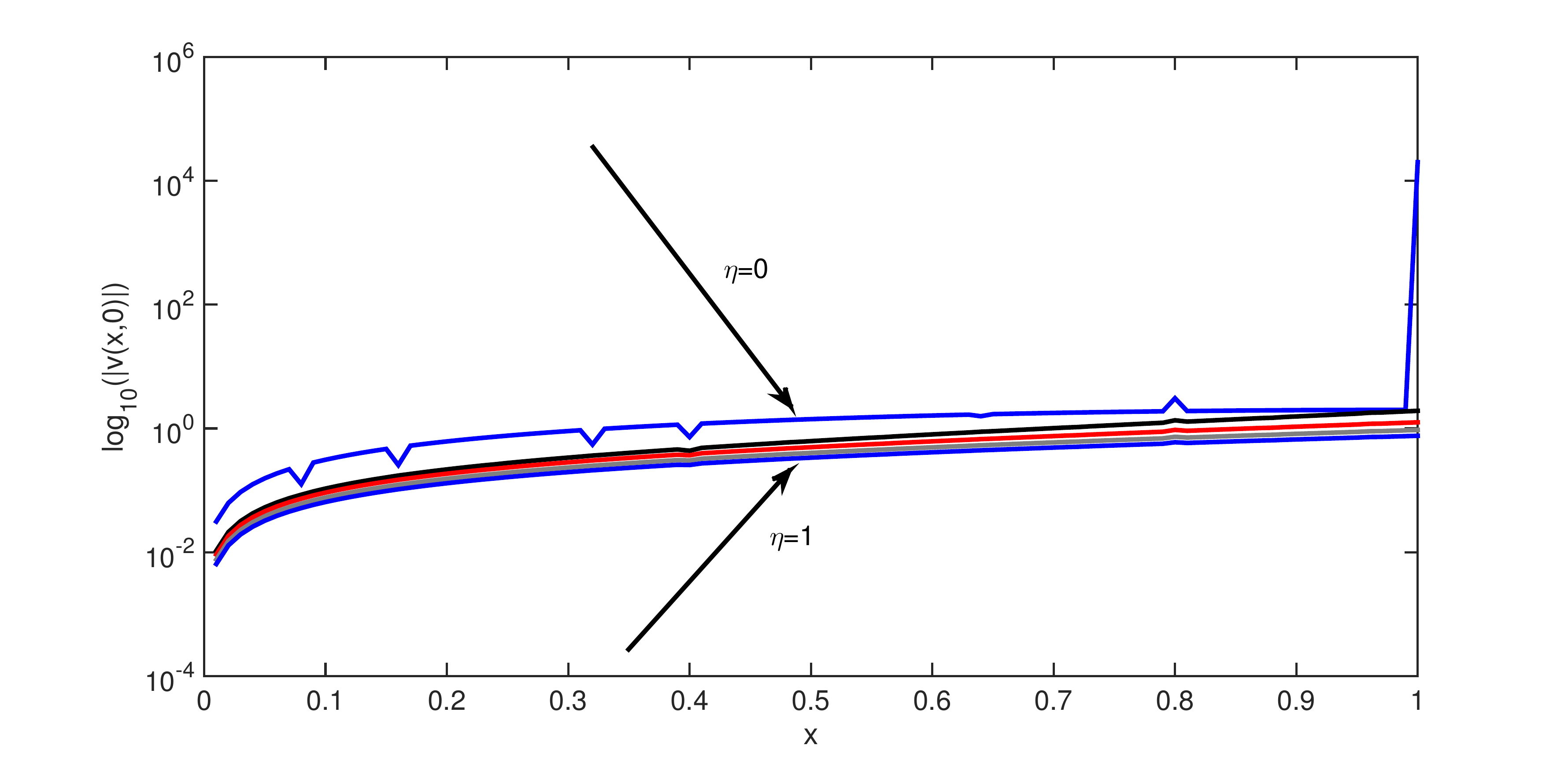}};
% x-axis
\fill [white] (-1,-3.1) rectangle (1,-2.7);
\node [right] at (-0.5,-2.9) {$\hat{x}$};
% y-axis
\fill [white] (-5.7,-1) rectangle (-5.2,1.5);
\node [right,rotate=90] at (-5.5,-1) {$\log_{10} |\hat{v}_{\hat{\eta}}(\hat{x},0)|$};
\end{tikzpicture}
\end{center}
\caption{Dimensionless discharge velocity $\hv_{\heta}(\hx,0)$ as a function of
$\hx$. In order to highlight the velocity blow-up at $\hx=1$, $\hatt=0$, in the
case $\heta=0$, the $\log_{10}$ plot of $|\hv_{\eta}|$ is plotted.}
\label{fig:discharge_velocity_t_0}
\end{center}
\end{figure}

In order to further investigate this blow-up and its dependence on the structural viscoelasticity, we observe that the maximum value of $\hv_{\heta}$ is attained at $\hx=1$ and $\hatt=0$ and can be written as a function of $\heta$ as follows:
\begin{equation*}
\hat{v}_{\max }\left( \hat{\eta}\right) =\max_{\substack{ 0\leq \hx\leq 1 \\ \hatt \geq 0}}\left\vert
\hv_{\heta }\left( \hx,\hatt\right) \right\vert =\hv_{\heta }\left( 1,0\right)
=2\sum_{n=0}^{\infty }\frac{1}{1+\heta \hlambda _{n}}\,.
\end{equation*}
The above series may be summed (\cite{Gradshteyn-Ryzhik}, formula no. 1.4212) and the final result is
\begin{equation}
\hat{v}_{\max }\left( \hat{\eta}\right) =\frac{1}{\sqrt{\hat{\eta}}}\tanh \left( \frac{1}{%
\sqrt{\hat{\eta}}}\right) \label{M(eta)}\,.
\end{equation}
Similarly, the dimensionless power density \eqref{power hat step} is decreasing in time and its
maximum is attained at $\hat{t}=0$:%
\begin{eqnarray*}
\hat{\mathcal{P}}_{\max }\left( \hat{\eta}\right)  &=&\max_{\hat{t}\geq 0}%
\hat{\mathcal{P}}_{\hat{\eta}}\left( \hat{t}\right) =\hat{\mathcal{P}}_{\hat{%
\eta}}\left( 0\right) =2\sum_{n=0}^{\infty }\frac{1}{\left( 1+\hat{\eta}\hat{%
\lambda}_{n}\right) ^{2}} \\
&=&\frac{1}{2\hat{\eta}}\left( \tanh ^{2}\frac{1}{\sqrt{\hat{\eta}}}+\sqrt{%
\hat{\eta}}\tanh \frac{1}{\sqrt{\hat{\eta}}}-1\right)\,.
\end{eqnarray*}%
The behaviors of $\hat{v}_{\max }\left( \hat{\eta}\right) $ and $\hat{\mathcal{P}}_{\max }\left( \hat{\eta}\right) $ with respect to $\heta$ are visualized in Fig.~\ref{fig:M_eta_step_pulse}.
\begin{figure}[h!]
\begin{center}
\begin{tikzpicture}
% graph
\node at (0,0)
{\includegraphics[scale=0.35]{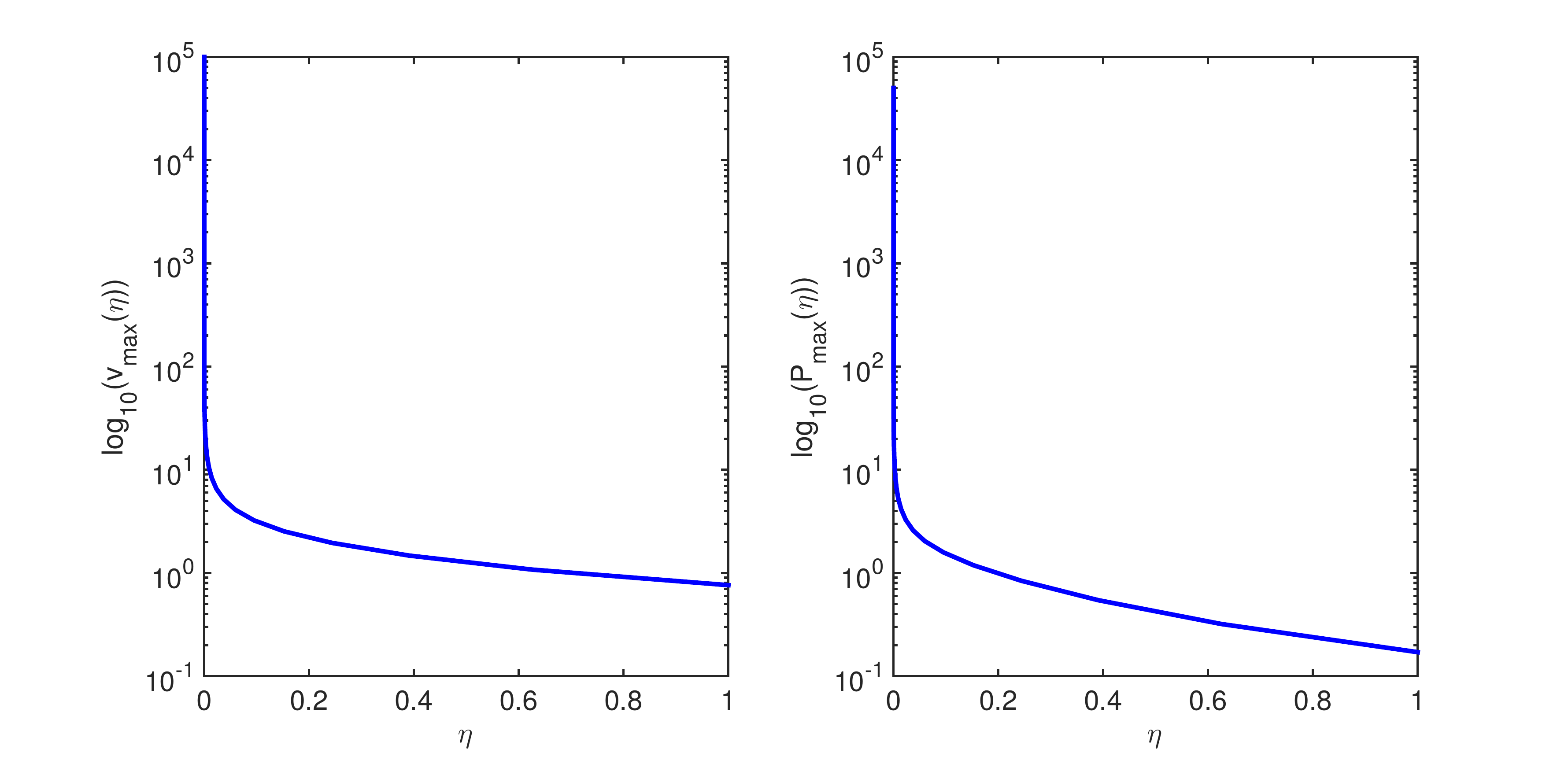}};
% x-axis
\fill [white] (-3,-3.1) rectangle (-2,-2.7);
\node [right] at (-2.8,-2.9) {$\hat{\eta}$};
\fill [white] (2.5,-3.1) rectangle (3.5,-2.7);
\node [right] at (2.7,-2.9) {$\hat{\eta}$};
% y-axis
\fill [white] (-5.7,-1) rectangle (-5.2,1);
\node [right,rotate=90] at (-5.5,-1) {$\log_{10}(\hat{v}_{\max}(\hat{\eta}))$};
% z-axis
\fill [white] (-0.1,-1) rectangle (0.4,1);
\node [right,rotate=90] at (0.1,-1) {$\log_{10}(\hat{P}_{\max}(\hat{\eta}))$};
\end{tikzpicture}
\caption{Left panel: maximal discharge velocity as a function of $\heta$.
Right panel: power density as a function of $\heta$. Log$_{10}$-scale
is used on the $y$-axis to
better highlight blow-up of both quantities as $\hat{\eta} \rightarrow 0$.}
\label{fig:M_eta_step_pulse}
\end{center}
\end{figure}

Clearly, the dimensionless parameter $\heta$ is the sole determinant of the blow-up in $\hat{v}_{\max }\left( \hat{\eta}\right) $ and $\hat{\mathcal{P}}_{\max }\left( \hat{\eta}\right) $, with smaller values of $\heta$ leading to higher values of $\hat{v}_{\max }\left( \hat{\eta}\right) $ and $\hat{\mathcal{P}}_{\max }\left( \hat{\eta}\right)$.
However, it is important to recall that $\heta$ does not depend solely on the viscoelasticity coefficient. The definition $\heta=\eta K_0/L^2$ implies that also small values of the permeability $K_0$ and/or large values of the domain dimension $L$ would reduce the value of $\heta$. Precisely,
going back to dimensional units, we can use~\eqref{eq:scalf_velocity} and~\eqref{eq:scalf_power_density}
to obtain the following expressions:
\begin{align}
& v_{\max}\left( \hat{\eta} \right) = \frac{P_{\text{ref}} K_0}{L} \hat{v}_{\max }\left( \hat{\eta}\right)
& \label{eq:v_max_dimensional} \\
& \mathcal{P}_{\max} \left( \hat{\eta} \right) = \frac{P_{\text{ref}}^2 K_0}{L}
\hat{\mathcal{P}}_{\max} \left( \hat{\eta} \right). & \label{eq:powerdensity_max_dimensional}
\end{align}

%
% [v] = \frac{[u]}{[t]} =  \frac{K_0}{L}P_{\text{ref}},
% \end{equation}
% %
% \begin{equation}\label{eq:scalf_power_density}
% [\mathcal P] = \frac{L}{K_0}[v]^2 =  \frac{K_0}{L}P_{\text{ref}}^2\,,
% \end{equation}
% obtain the following expressions:
% \begin{equation}
% v_{\max }\left( \eta \right)=\max_{\substack{ 0\leq x\leq 1 \\ t\geq 0}}\left\vert
% v_{\eta }\left( x,t\right) \right\vert =v_{\eta }\left( L,0\right) =P_{%
% \mathrm{ref}}\sqrt{\frac{K_{0}}{\eta }}\tanh \left( \frac{L}{\sqrt{K_{0}\eta
% }}\right)
% \end{equation}
% and
% \begin{equation*}
% \mathcal{P}_{\max }\left( \eta \right) =\max_{t\geq 0}\mathcal{P}_{\eta
% }\left( t\right) =\mathcal{P}_{\eta }\left( 0\right) =\frac{LP_{\mathrm{ref}%
% }^{2}}{2\eta }\left( \tanh ^{2}\frac{L}{\sqrt{K_{0}\eta }}+\frac{\sqrt{%
% K_{0}\eta }}{L}\tanh \frac{L}{\sqrt{K_{0}\eta }}-1\right)\,.
% \end{equation*}
We can see that the magnitude $P_{\text{ref}}$ of the driving term appears as a multiplying constant in the expressions above, meaning that larger values of $P_{\text{ref}}$ will lead to larger values of $v_{\max}$ and $\mathcal{P}_{\max }$. However, the factor that controls the blow-up still remains the dimensionless parameter
$\heta = \eta K_0/L^2$.
These concepts are illustrated in Fig.~\ref{fig:dimensional_M_eta_step_pulse},
where we set $K_0/L = 1 \,\unit{m^2 s \,Kg^{-1}}$ and we consider increasing values of $P_{\text{ref}}$
in the range $[10^{-3}, 10^3] \, \unit{N m^{-2}}$.
\begin{figure}[h!]
\begin{center}
\begin{tikzpicture}
% graph
\node at (0,0)
{\includegraphics[scale=0.35]{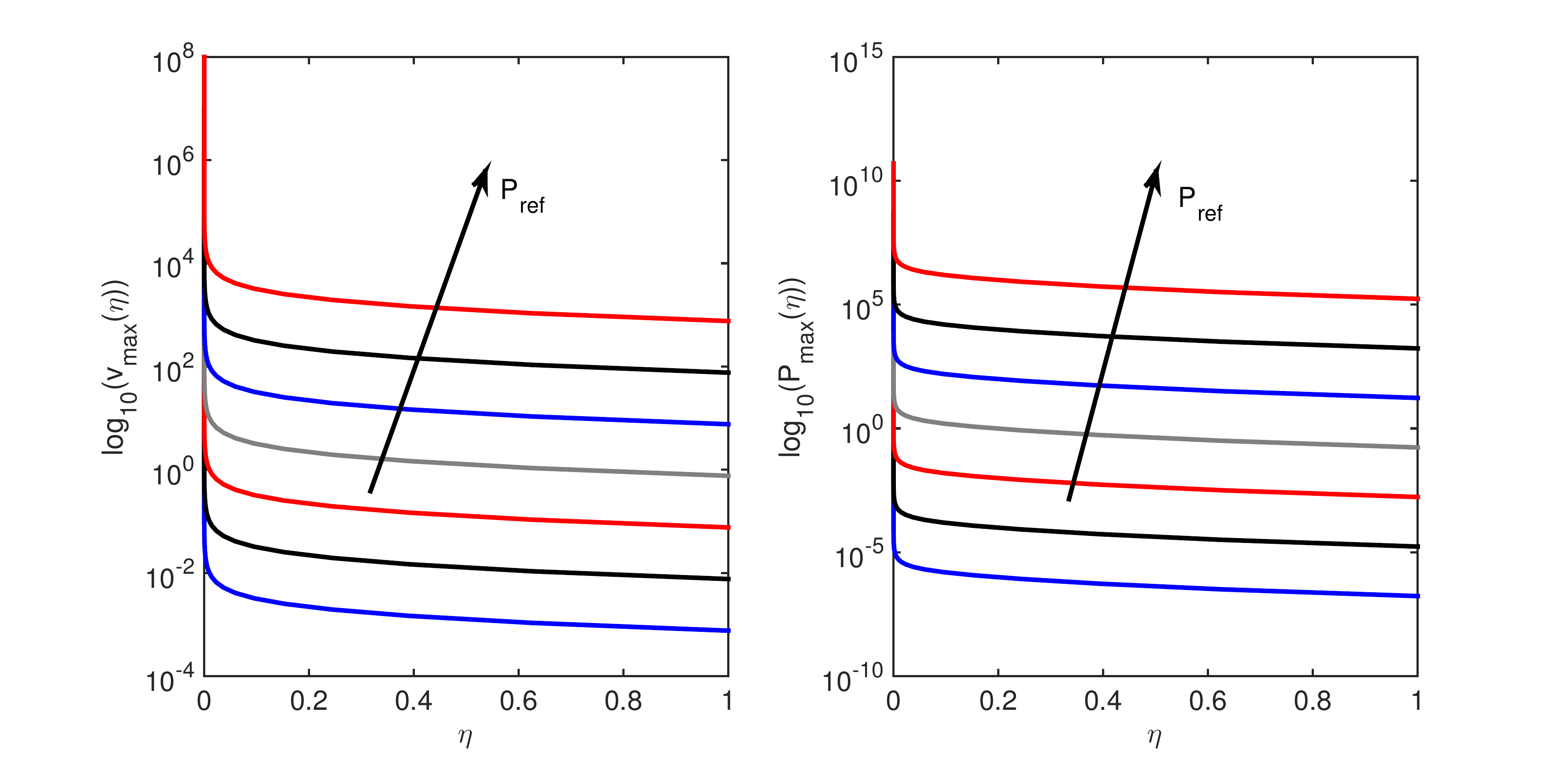}};
% x-axis
\fill [white] (-3,-3) rectangle (-2,-2.7);
\node [right] at (-2.8,-2.8) {$\hat{\eta}$};
\fill [white] (2.5,-3) rectangle (3.5,-2.7);
\node [right] at (2.8,-2.8) {$\hat{\eta}$};
% y-axis
\fill [white] (-5.6,-1) rectangle (-5.2,1);
\node [right,rotate=90] at (-5.4,-1) {$\log_{10}(\hat{v}_{\max}(\hat{\eta}))$};
\fill [white] (-0.3,-1) rectangle (0.3,1);
\node [right,rotate=90] at (0.1,-1) {$\log_{10}(\hat{P}_{\max}(\hat{\eta}))$};
\end{tikzpicture}
\caption{Left panel: dimensional maximal discharge velocity as a function of $\heta$.
Right panel: dimensional power density as a function of $\heta$. Log$_{10}$-scale is used on the $y$-axis to
better highlight blow-up of both quantities as $\hat{\eta} \rightarrow 0$. We set $K_0/L = 1 \,
\unit{m^2 s Kg^{-1}}$. The black arrows indicate increasing values of $P_{\text{ref}}$
in the range $[10^{-3}, 10^3] \, \unit{N m^{-2}}$.}
\label{fig:dimensional_M_eta_step_pulse}
\end{center}
\end{figure}

%-----------------------------------------------------------------------------
\section{The case $\hat{P}(\hatt)$ = trapezoidal pulse}\label{sec:trapezoidal_pulse}
%-----------------------------------------------------------------------------

Let us now consider the case of a driving term given by a trapezoidal pulse, where the signal switch on and switch off are characterized by linear ramps. Thus,
let $\hat{P}\left( \hat{t}\right) $
be the dimensionless trapezoidal pulse of unit amplitude and let
$\hat{\varepsilon}$, $\hat{\tau}>0$. Here we consider the following expression for  $\hat{P}\left( \hat{t}\right) $:
%and , with equal durations of
%rising and falling parts and starting at $\hat{t}=0$:%
\begin{equation}\label{eq:P_trapezoidal_pulse}
\hat{P}\left( \hat{t}\right) =\left\{
\begin{array}{lll}
0 & \text{if} & \hat{t}<0
\\
\dfrac{\hat{t}}{\hat{\varepsilon}} & \text{if} & 0\leq \hat{t}<\hat{%
\varepsilon} \\
1 & \text{if} & \hat{\varepsilon}\leq \hat{t}<\hat{\varepsilon}+\hat{\tau}
\\
\dfrac{\hat{\tau}-\hat{t}}{\hat{\varepsilon}}+2 & \text{if} & \hat{%
\varepsilon}+\hat{\tau}\leq \hat{t}<2\hat{\varepsilon}+\hat{\tau} \\
0 & \text{if} & \hat{t}\geq 2\hat{\varepsilon}+\hat{\tau}%
\end{array}%
\right.
\end{equation}%
where $\hat{\varepsilon}$ is the pulse rise/fall time and $\hat{\tau}$ is
the pulse duration, as depicted in Fig.~\ref{fig:trapezoidal_pulse}.  Thus, the trapezoidal pulse defined in \eqref {eq:P_trapezoidal_pulse} reduces to a rectangular step pulse
%defined in \eqref{eq:H}
%of Section~\ref{sec:step_pulse}
as $\hat{\varepsilon}\rightarrow 0$.

 %---------------------------------------------------
% tikz for illustration of trapezoidal pulse
%---------------------------------------------------
\begin{figure}[h!]
\begin{center}
\begin{tikzpicture}
%origin
\node[below] at (-0.05,0) {$0$};
% x-axis
\draw[dashed] (-3,0) -- (-2,0);
\draw[->] (0,0) -- (7.2,0);
\node[below] at (7.1,0) {$\hatt$};
% y-axis
\draw[->] (0,0) -- (0,3.5);
\node[left] at (-0.05,3.3) {$\hat{P}(\hat{t})$};
%pulse
\draw[very thick] (-2,0) -- (0,0);
\draw[very thick] (0,0) -- (1,2);
\draw[very thick] (1,2) -- (4,2);
\draw[very thick] (4,2) -- (5,0);
\draw[very thick] (5,0) -- (6.8,0);
%dashed lines and notation
\draw[dashed] (0,2) -- (1,2);
\node[left] at (-0.05,2) {$1$};
\draw[dashed] (1,0) -- (1,2);
\node[below] at (1,0) {$\hat{\varepsilon}$};
\draw[dashed] (4,0) -- (4,2);
\node[below] at (4,0) {$\hat{\tau}+\hat{\varepsilon}\quad$};
\node[below] at (5,0) {$\quad\hat{\tau}+2\hat{\varepsilon}$};
\end{tikzpicture}
\end{center}
\caption{Schematic representation of the dimensionless trapezoidal pulse $\hat{P}(\hatt)$ defined in~\eqref{eq:P_trapezoidal_pulse}. Here, the signal switch on and switch off are characterized by linear ramps.}
\label{fig:trapezoidal_pulse}
\end{figure}
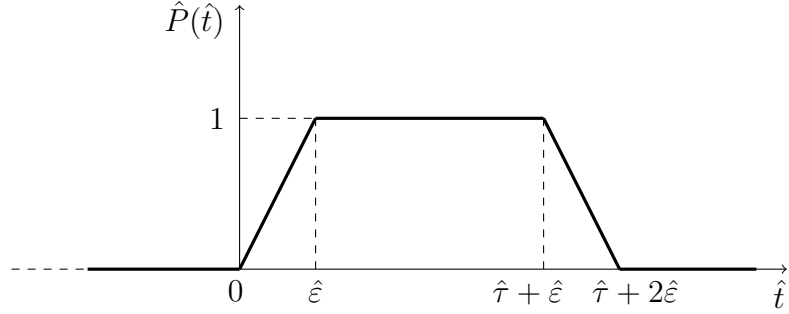
%---------------------------------------------------

Let us now compute the dimensionless
discharge velocity resulting from the application of the trapezoidal pulse at the boundary, henceforth denoted by $\hat{V}_{\hat{\eta}}\left( \hat{x},\hat{t}%
\right) $ to distinguish it from the discharge velocity $\hat{v}_{\hat{\eta}}\left( \hat{x},\hat{t}%
\right) $ obtained for the step pulse. To this end, we observe that the trapezoidal pulse is actually the linear combination of four
ramp pulses of unit slope starting at times $\hat{t}=0$, $\hat{\varepsilon}$%
, $\hat{\varepsilon}+\hat{\tau}$,$\ 2\hat{\varepsilon}+\hat{\tau}$:%
\begin{eqnarray}\label{eq:trap_P}
\hat{P}\left( \hat{t}\right)  &=&\frac{1}{\hat{\varepsilon}}\left\{ \hat{t}%
H\left( \hat{t}\right) -\left( \hat{t}-\hat{\varepsilon}\right) H\left( \hat{%
t}-\hat{\varepsilon}\right) \right. \\
&&\left. -\left( \hat{t}-\hat{\tau}-\hat{\varepsilon}\right) H\left( \hat{t}-%
\hat{\tau}-\hat{\varepsilon}\right) +\left( \hat{t}-\hat{\tau}-2\hat{%
\varepsilon}\right) H\left( \hat{t}-\hat{\tau}-2\hat{\varepsilon}\right)
\right\} \notag
\end{eqnarray}%
where the function $H(\hatt)$ was defined in \eqref{eq:H}.
In addition, we can see that
\begin{equation*}
\hat{u}\left( \hat{x},\hat{t}\right) =\int_{0}^{\hat{t}}\hat{u}_{\hat{\eta}%
}\left( \hat{x},s\right) ds
\end{equation*}%
where $\hat{u}_{\hat{\eta}}\left( \hat{x},\hat{t}\right)$ is given by \eqref{u hat step}, solves problem \eqref{eq:1d_problem_per_hu} with $\hat{P}\left( \hat{t}\right) =\hat{t}H\left( \hat{t}\right)$
(the ramp pulse of unit slope starting at time $\hat{t}=0$).
Thus, using~\eqref{v}, we have that the discharge velocity is given by
$-\hat{u}_{\hat{\eta}}\left( \hat{x},\hat{t}\right)$, so that, applying the linear superposition principle, $\hat{V}_{\hat{\eta}}\left( \hat{x},\hat{t}\right) $ is the linear combination of the
velocities corresponding to the various components of the pulse, namely:%
\begin{eqnarray}
\hat{V}_{\hat{\eta}}\left( \hat{x},\hat{t}\right)  &=&\frac{1}{\hat{%
\varepsilon}}\left\{ -\hat{u}_{\hat{\eta}}\left( \hat{x},\hat{t}\right)
H\left( \hat{t}\right) +\hat{u}_{\hat{\eta}}\left( \hat{x},\hat{t}-\hat{%
\varepsilon}\right) H\left( \hat{t}-\hat{\varepsilon}\right) \right.
\label{V_max} \notag\\
&&+\hat{u}_{\hat{\eta}}\left( \hat{x},\hat{t}-\hat{\tau}-\hat{%
\varepsilon}\right) H\left( \hat{t}-\hat{\tau}-\hat{\varepsilon}\right) \\
&&\left. -
\hat{u}_{\hat{\eta}}\left( \hat{x},\hat{t}-\hat{\tau}-2\hat{\varepsilon}%
\right) H\left( \hat{t}-\hat{\tau}-2\hat{\varepsilon}\right) \right\}\,.
\notag
\end{eqnarray}
An illustration of the typical form of
$\hat{V}_{\hat{\eta}}\left( \hat{x},\hat{t}\right) $ for $\hat{\eta} \geq 0$ and $\hat{\varepsilon}>0$ is reported
in Fig.~\ref{fig:discharge_velocity_trapezoidal_pulse}.

\begin{figure}[h!]
\begin{center}
\begin{tikzpicture}
% graph
\node at (0,0)
{\includegraphics[scale=0.35]{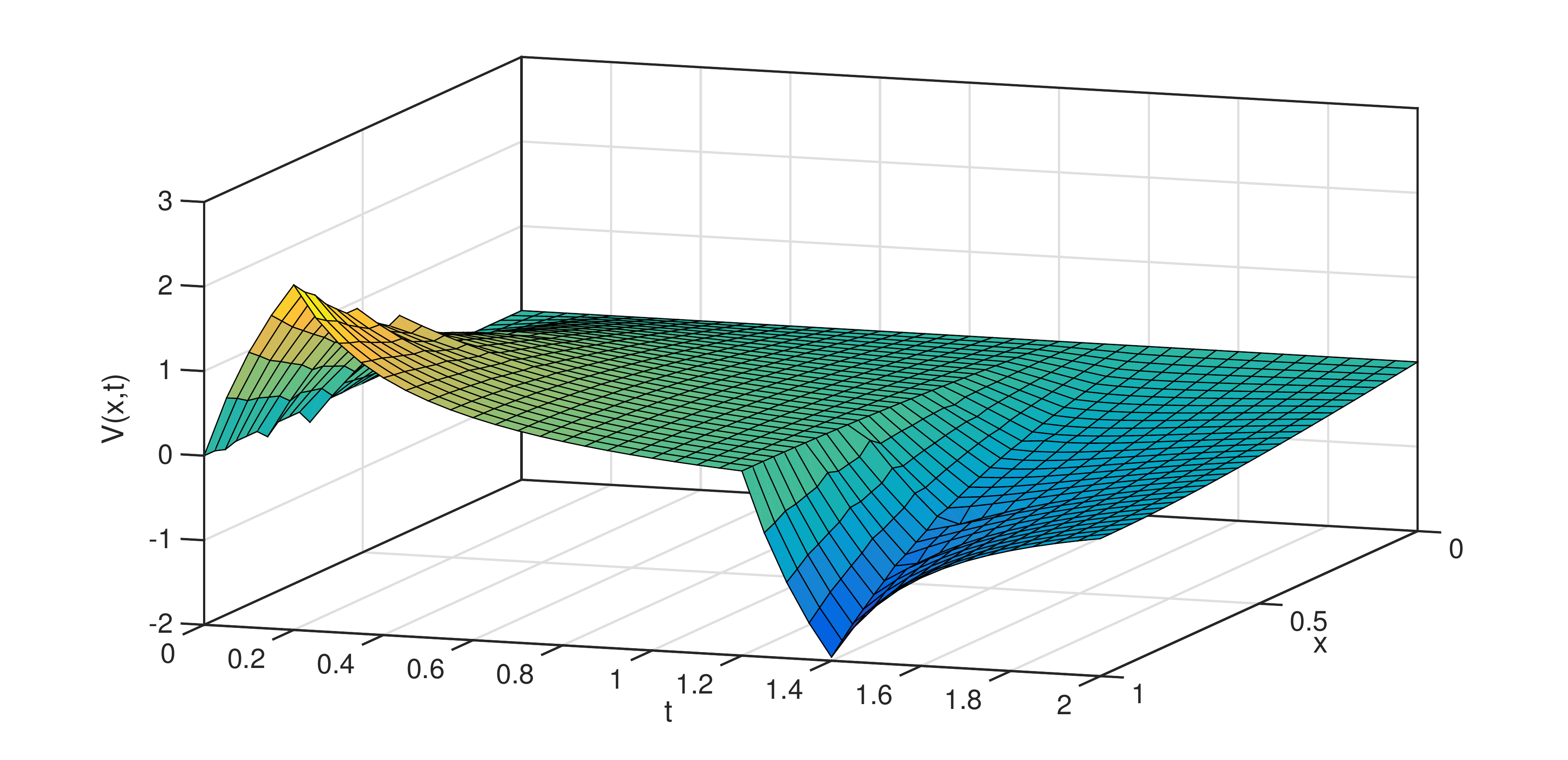}};
% x-axis
\fill [white] (4,-2.5) rectangle (4.5,-2.05);
\node [right] at (4,-2.3) {$\hat{x}$};
%y-axis
\fill [white] (-1.5,-3) rectangle (-0.7,-2.6);
\node [right] at (-1.5,-2.8) {$\hat{t}$};
%z-axis
\fill [white] (-5.6,-0.6) rectangle (-5.1,0.5);
\node [right,rotate=90] at (-5.4,-1.2) {$\hat{V}_{0.1}(\hat{x},\hat{t}\,)$};
\end{tikzpicture}
\caption{Dimensionless discharge velocity $\hat{V}_{\hat{\eta}}\left( \hat{x},\hat{t}%
\right) $ for $\hat{\eta}=0.1$, $\hat{\varepsilon}=0.2$, $\hat{\tau}=1$.} \label{fig:discharge_velocity_trapezoidal_pulse}
\end{center}
\end{figure}
The maximum possible discharge velocity occurs at $\hat{x}=1$, $%
\hat{t}=\hat{\varepsilon}$ and can be written as%
\begin{align}
\hat{V}_{\max }\left( \hat{\eta},\hat{\varepsilon}\right)
&=\max_{\substack{ %
0\leq \hat{x}\leq 1 \\ \hat{t}\geq 0}}\left\vert \hat{V}_{\hat{\eta}}\left(
\hat{x},\hat{t}\right) \right\vert =\hat{V}_{\hat{\eta}}\left( 1,\hat{%
\varepsilon}\right) \nonumber\\
&=\frac{2}{\hat{\varepsilon}}\sum_{n=0}^{\infty }\frac{1}{%
\hat{\lambda}_{n}}\left\{ 1-\exp \left( -\frac{\hat{\lambda}_{n}\hat{%
\varepsilon}}{1+\hat{\eta}\hat{\lambda}_{n}}\right) \right\} .\label{eq:V_max_eta_eps}
\end{align}%
The behavior of $\hat{V}_{\max } $ with respect to $\hat{\eta}$ and $\hat{\varepsilon}$ is reported in Fig.~\ref{fig:max_discharge_velocity_trapez_pulse}.
\begin{figure}[h!]
\begin{center}
\begin{tikzpicture}
%graph
\node at (0,0)
{\includegraphics[scale=0.35]{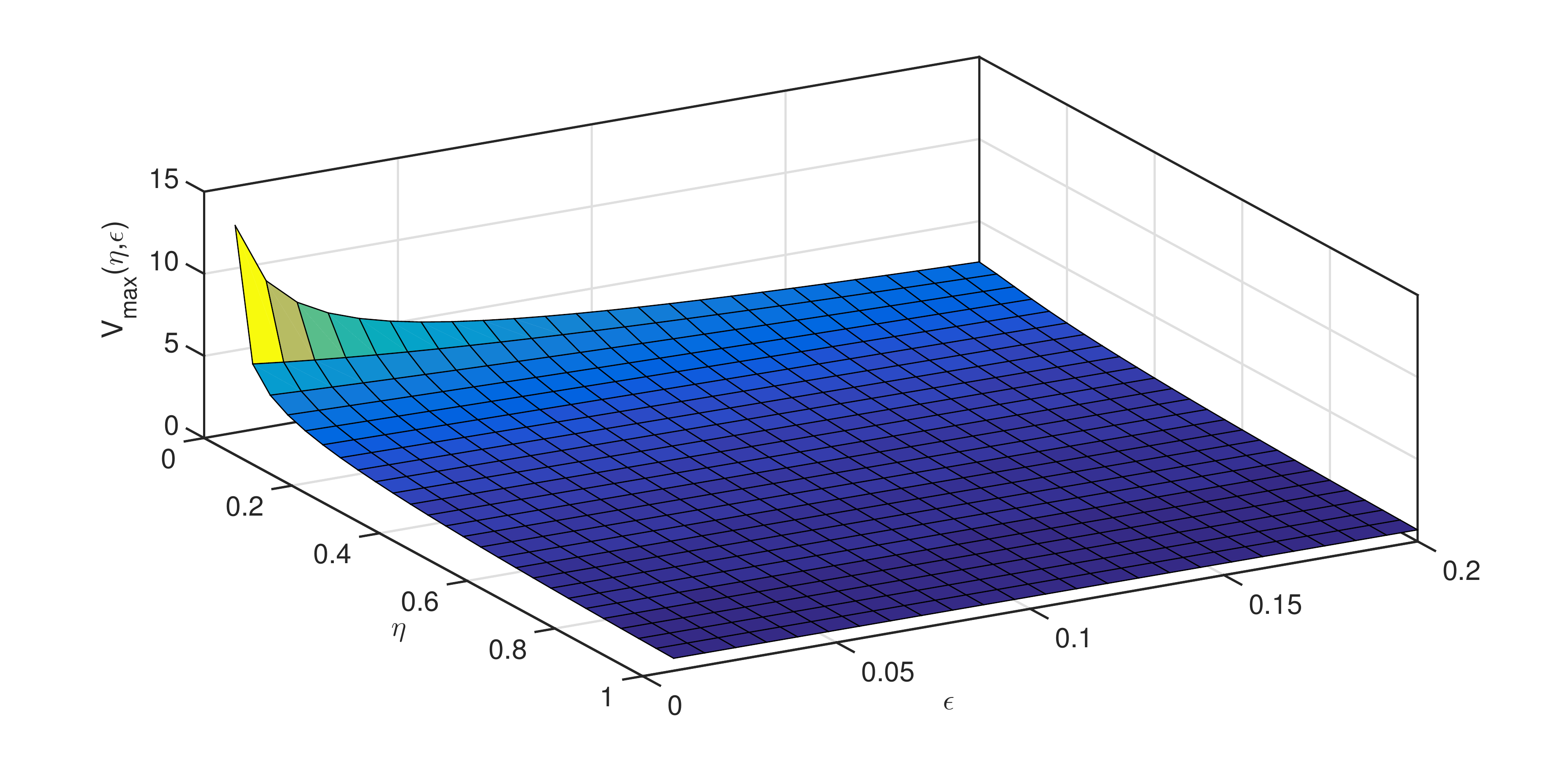}};
% x-axis
\fill [white] (1,-3) rectangle (1.5,-2.5);
\node [right] at (1.5,-2.3) {$\hat{\varepsilon}$};
% y-axis
\fill [white] (-3.3,-2.2) rectangle (-3,-1.9);
\node [right] at (-3.6,-1.9) {$\hat{\eta}$};
% z-axis
\fill [white] (-5.7,0) rectangle (-5.2,1.7);
\node [right,rotate=90] at (-5.5,-0.3) {$\hat{V}_{\max}(\hat{\eta},\hat{\varepsilon})$};
\end{tikzpicture}
\caption{Dimensionless maximal discharge velocity
$\hat{V}_{\max }\left( \hat{\eta},\hat{\varepsilon}\right)$ as a function of
$\hat{\eta}$ and $\hat{\varepsilon}$.}
\label{fig:max_discharge_velocity_trapez_pulse}
\end{center}
\end{figure}
Interestingly, for all $\hat{\eta}>0$ and $\hat{\varepsilon}\geq 0$ we have%
\begin{equation*}
\hat{V}_{\max }\left( \hat{\eta},\hat{\varepsilon}\right) \leq \hat{V}_{\max
}\left( \hat{\eta},0\right)
\end{equation*}%
and
\begin{equation*}
\hat{V}_{\max }\left( \hat{\eta},0\right) =\hat{v}_{\max }\left( \hat{\eta}%
\right)
\end{equation*}%
since the trapezoidal pulse reduces to a rectangular pulse as $\hat{%
\varepsilon}\rightarrow 0$. Thus, no blow-up takes place in the
viscoelastic case when the pulse is rectangular. Similarly, for all $\hat{%
\eta}\geq 0$ and $\hat{\varepsilon}>0$ it is%
\begin{equation*}
\hat{V}_{\max }\left( \hat{\eta},\hat{\varepsilon}\right) \leq \hat{V}_{\max
}\left( 0,\hat{\varepsilon}\right) =\frac{2}{\hat{\varepsilon}}%
\sum_{n=0}^{\infty }\frac{1-\exp \left( -\hat{\lambda}_{n}\hat{\varepsilon}%
\right) }{\hat{\lambda}_{n}}
\end{equation*}%
hence no blow-up takes place even in the purely elastic case when the pulse
is trapezoidal.

%=============================================================
\section{Application in biomechanics: role of structural viscoelasticity in confined compression tests for biological tissues}\label{sec:athesian_soltz_case}
%=============================================================

In this section, we utilize the mathematical analysis developed above to study some interesting features of confined compression tests, which are often utilized in biomechanics to characterize the properties of biological tissues.
A schematic of the confined compression experimental setting is depicted in Fig.~\ref{fig:confined_compression_chamber}, where a compressive load
is applied at the chamber top surface while
the bottom surface is maintained fixed.
%The deformed material configuration after application of the compressive load is shown in Fig.~\ref{fig:creep_after_deformation}.
Due to confinement,
deformation occurs only in the $x$ direction. The analogy with the 1D setting depicted in Fig.~\ref{fig:1D_model} is  straightforward, with a note of caution that 1D solutions are most representative of the biomechanical status of the material along vertical lines towards the center of the chamber.

%---------------------------------------------------
% confined compression chamber
%---------------------------------------------------
\begin{figure}[h!]
\begin{center}
\begin{tikzpicture}
% porous medium
\node[inner sep=0pt] at (0,0)
    {\includegraphics[width=.4\textwidth]{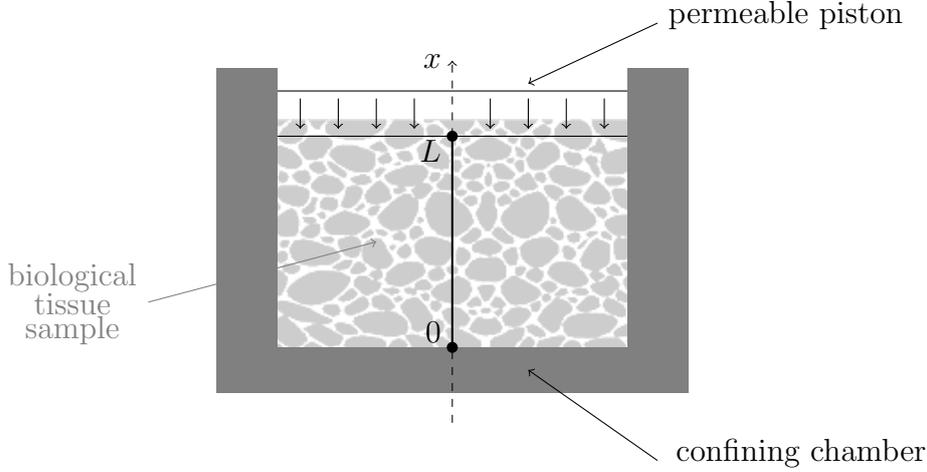}};
% upper boundary
\draw (-3,2) rectangle (3,1.4);
\draw[->,black] (-2.5,1.9) -- (-2.5,1.5);
\draw[->,black] (-2,1.9) -- (-2,1.5);
\draw[->,black] (-1.5,1.9) -- (-1.5,1.5);
\draw[->,black] (-1,1.9) -- (-1,1.5);
\draw[->,black] (-0.5,1.9) -- (-0.5,1.5);
\draw[->,black] (2.5,1.9) -- (2.5,1.5);
\draw[->,black] (2,1.9) -- (2,1.5);
\draw[->,black] (1.5,1.9) -- (1.5,1.5);
\draw[->,black] (1,1.9) -- (1,1.5);
\draw[->,black] (0.5,1.9) -- (0.5,1.5);
\node[right] at (2.7,3) {permeable piston};
\draw[->] (2.7,2.9) -- (1,2.1);
% lower boundary
\fill [gray] (-3.1,-2) rectangle (3.1,-1.4);
\fill [gray] (-3.1,-2) rectangle (-2.3,2.3);
\fill [gray] (2.3,-2) rectangle (3.1,2.3);
\node[right] at (2.8,-2.8) {confining chamber};
\draw[->] (2.7,-2.9) -- (1,-1.7);
% x-axis
\draw[black, thick] (0,-1.4) -- (0,1.4);
\draw[->,black, dashed] (0,1.4) -- (0,2.4);
\node[left, black] at (0,2.4) {$x$};
\draw[black, dashed] (0,-2.4) -- (0,-1.4);
\fill (0,1.4) circle (.4ex);
\fill (0,-1.4) circle (.4ex);
\node[left] at (0,-1.2) {$0$};
\node[left] at (0,1.2) {$L$};
\draw[->,gray] (-4,-0.8) -- (-1,0);
\node[above,gray] at (-5,-0.8) {biological};
\node[above,gray] at (-5,-1.1) {tissue};
\node[above,gray] at (-5,-1.5) {sample};
\end{tikzpicture}
\end{center}
\caption{Schematic representation of a confined compression chamber.}
\label{fig:confined_compression_chamber}
\end{figure}
%---------------------------------------------------

The 1D model described in Section~\ref{sec:1D_problem_and_formulation} allows us to generalize the mathematical analysis carried out in~\cite{Soltz1998} by quantifying the effect of structural viscoelasticity on the tissue response to sudden changes in external pressure during confined compression experiments.
In this perspective, $L$ represents the thickness of the undeformed tissue
sample and $P_{\text{ref}}$ represents the magnitude of the total compressive stress
applied at $x=L$. It is important to notice that the
confined compression creep experiment described in~\cite{Soltz1998} is characterized by the same initial and boundary conditions as those given in~\eqref{eq:1D_BC_IC_s}.
However, the mathematical description adopted in~\cite{Soltz1998} (based on the theory developed
in~\cite{mow}), does not account for structural viscoelasticity, which amounts to setting
$\eta=0$ in~\eqref{eq:1D_model_sigma_0}.
Table~\ref{tab:atesian_soltz} summarizes the parameter values corresponding to the experiments on articular cartilage reported in~\cite{Soltz1998}.
\begin{table}[h!]
\centering
\begin{tabular}{|l|l|l|}
\hline
\textsf{symbol} & \textsf{value} & \textsf{units} \\ \hline
$L$             & $0.81 \cdot 10^{-3}$ & $\unit{m}$ \\ \hline
$\mu$           & $0.97 \cdot 10^6$    & $\unit{N m^{-2}}$ \\ \hline
$\eta$          & $0$                  & $\unit{N \,s \,m^{-2}}$ \\ \hline
$K_0$             & $2.9 \cdot 10^{-16}$ & $\unit{m^4 N^{-1} s^{-1}}$ \\ \hline
$P_{\text{ref}}$           & $6 \cdot 10^{4}$     & $\unit{N m^{-2}}$ \\ \hline
\end{tabular}
\caption{Numerical values of model parameters in the confined compression experiment for articular cartilage
reported in~\cite{Soltz1998}.}
\label{tab:atesian_soltz}
\end{table}

Characteristic velocities for the confined compression experiments reported in~\cite{Soltz1998} are of the order of 0.35 $\mu$m/s.
However, our analysis showed that velocities in the sample may attain much higher values, particularly near the permeable piston, should viscoelasticity be too low and/or the time profile of the load protocol be too sharp. In order to make these concepts more specific, let us assume that: (i) the load protocol for the confined compression experiment corresponds to the trapezoidal pulse described in Section~\ref{sec:trapezoidal_pulse}; and that (ii) we should ensure that discharge velocities always remain below the threshold value $V_{\text{th}}=0.35$ $\mu$m/s in order to preserve the sample from microstructural damage. This means that $\hat{\eta}$ and $\hat{\varepsilon}$ should be chosen as to guarantee that
\begin{equation}
\hat{V}_{\max } (\hat{\eta},\hat{\varepsilon}) <  \hat{V}_{\text{th}} =  \frac{L}{P_{\text{ref}} K_0}\, V_{\text{th}} = 16.3\,,
\end{equation}
where $\hat{V}_{\max } (\hat{\eta},\hat{\varepsilon})$ can be computed using the expression \eqref{eq:V_max_eta_eps}. Fig.~\ref{fig:max_discharge_velocity_trapez_pulse_AS} compares $ \hat{V}_{\text{th}}$ with the values of $\hat{V}_{\max } $ obtained for different values of $\hat{\eta}$ and $\hat{\varepsilon}$.  Indeed, we can identify a whole region in the $(\hat{\eta},\hat{\varepsilon}) -$plane for which $\hat{V}_{\max } (\hat{\eta},\hat{\varepsilon}) <  \hat{V}_{\text{th}}$.

\begin{figure}[h!]
\begin{center}
\begin{tikzpicture}
% matlab figure
\node[inner sep=0pt] at (0,0)
{\includegraphics[scale=0.5]{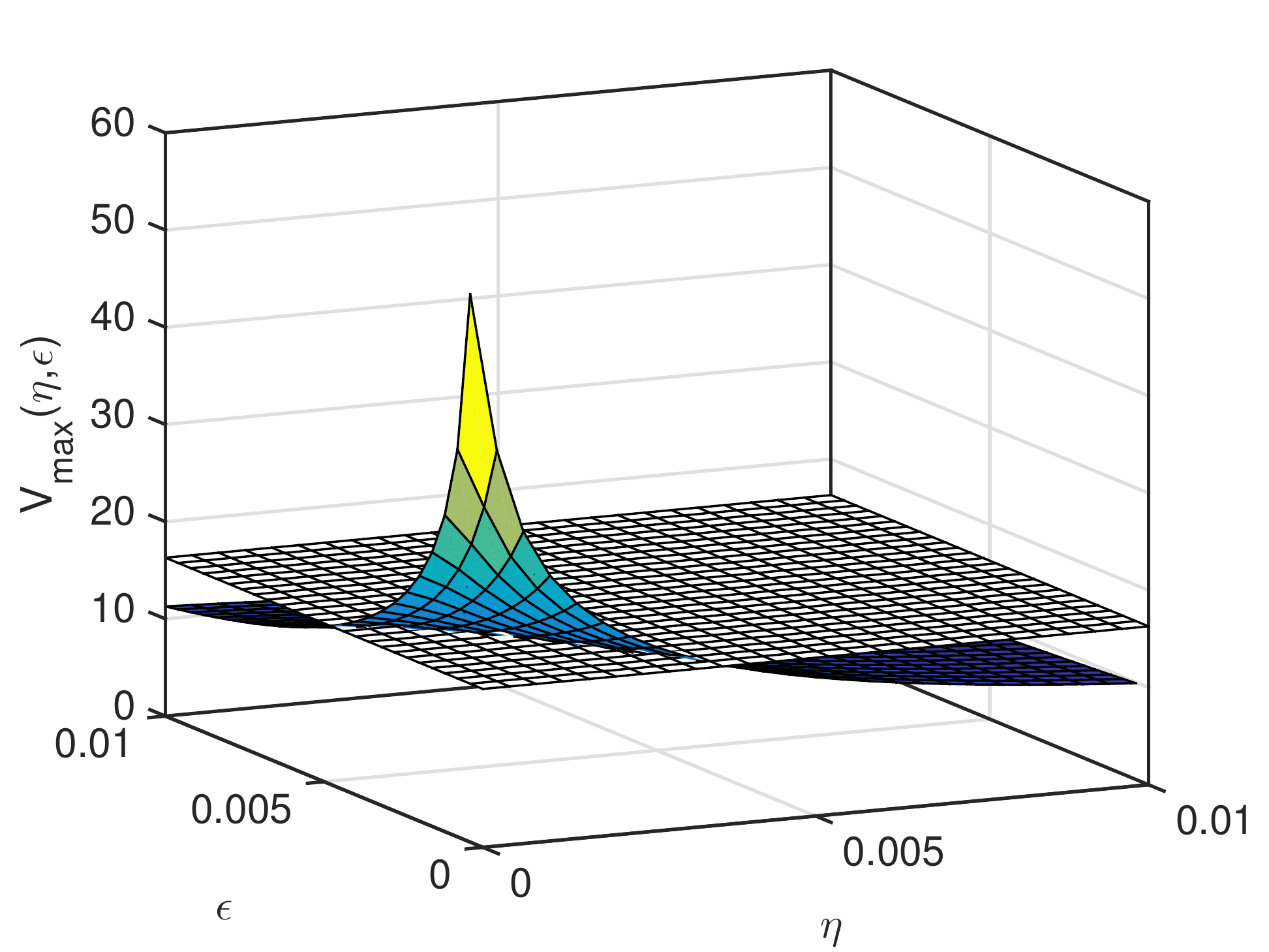}};
%{\includegraphics[width=\textwidth,height=0.6\textwidth]{vmax_3d.eps}};
% modify x label
\fill [white] (0,-4) rectangle (3,-3.2);
\node [above] at (1,-3.5) {$\hat{\eta}$};
% modify y label
\fill [white] (-5,-4) rectangle (-2,-3.2);
\node [above] at (-2.5,-3.2) {$\hat{\varepsilon}$};
% cover z label
\fill [white] (-5,-1) rectangle (-4.3,1.2);
% indicate V_max
\node [right] at (4.8,-1) {$\hat{V}_{\max}(\hat{\eta},\hat{\varepsilon})$};
\draw [->] (4.8,-1) -- (3.6,-1.4);
% indicate V_th
\node [right] at (4.7,0) {$\hat{V}_{\text{th}}$};
\draw [->] (4.7,0) -- (3.4,-0.8);
\end{tikzpicture}
\end{center}
\caption{Comparison between the dimensionless maximum velocity $\hat{V}_{\max}(\hat{\eta},\hat{\varepsilon})$ obtained in the case of trapezoidal pulse using expression \eqref{eq:V_max_eta_eps} and the threshold velocity $\hat{V}_{\text{th}} = 16.3$ typical of confined compression experiments~\cite{Soltz1998}.}
\label{fig:max_discharge_velocity_trapez_pulse_AS}
\end{figure}

To better visualize the various regions of interest in the $(\hat{\eta},\hat{\epsilon}) -$plane, we reported  the colormap of the difference $ \hat{V}_{\text{th}}-\hat{V}_{\max }$ as a function of $\hat{\eta}$ and $\hat{\varepsilon}$ in Fig.~\ref{fig:vth-vmax}, where we clearly marked the curve in the parameter space for which $ \hat{V}_{\text{th}}=\hat{V}_{\max }$. These results show that, for example, choosing $\hat{\varepsilon} \geq 5 \cdot 10^{-3}$ would lead to fluid velocities below the recommended threshold regardless of the presence of structural viscoelasticity. On the other hand, should
the particular experimental conditions constrain us to enforce a rectangular pulse corresponding to $\hat{\varepsilon}=0$, we can still maintain the maximum velocity below the threshold by modifying the structural properties of the sample to have, for example, $\hat{\eta}\geq4\cdot 10^{-3}$.

\begin{figure}[h!]
\begin{center}
\begin{tikzpicture}
% matlab figure
\node[inner sep=0pt] at (0,0)
{\includegraphics[scale=0.6]{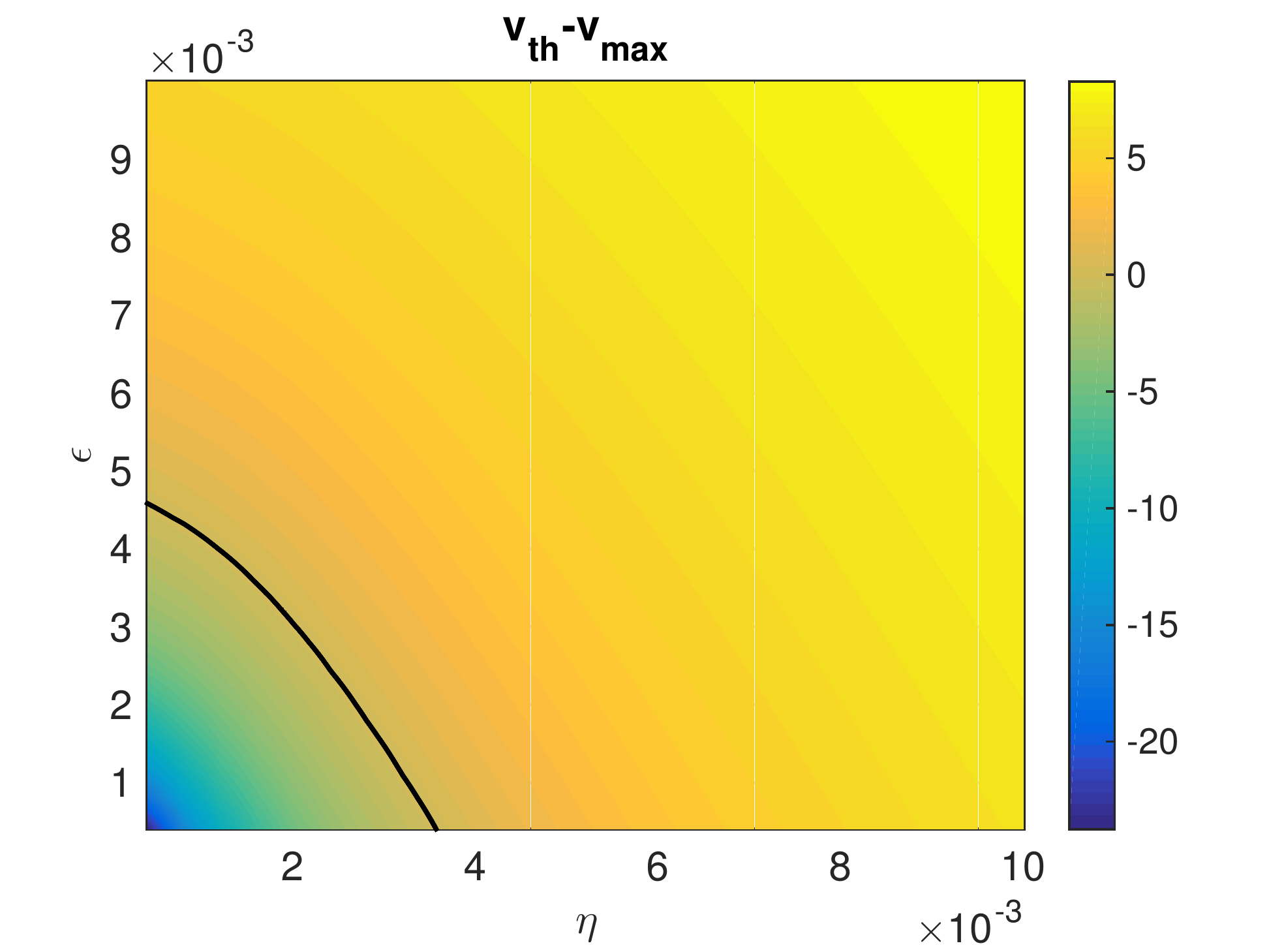}};
% modify x label
\fill [white] (-1,-4.5) rectangle (0,-4);
\node [above] at (-0.5,-4.5) {$\hat{\eta}$};
% modify y label
\fill [white] (-6,0) rectangle (-5,2);
\node [above] at (-5.5,0) {$\hat{\varepsilon}$};
% modify title
\fill [white] (-2,3.8) rectangle (2,4.5);
\node [above] at (-0.2,3.7) {Colormap of $\hat{V}_{\text{th}}-\hat{V}_{\max }(\hat{\eta},\hat{\varepsilon})$};
% comments on the map
\draw [fill=white] (-0.5,1) rectangle (2,1.6);
\node [above] at (0.8,1) {\small $\hat{V}_{\text{th}}>\hat{V}_{\max }(\hat{\eta},\hat{\varepsilon})$};
\begin{scope}[shift={(-5,-3.5)}]
\draw [fill=white] (-0.5,1) rectangle (2,1.6);
\node [above] at (0.8,1) {\small $\hat{V}_{\text{th}}<\hat{V}_{\max }(\hat{\eta},\hat{\varepsilon})$};
\end{scope}
\begin{scope}[shift={(-3.5,-5.7)}]
\draw [fill=white] (-0.5,1) rectangle (2,1.6);
\node [above] at (0.8,1) {\small $\hat{V}_{\text{th}}=\hat{V}_{\max }(\hat{\eta},\hat{\varepsilon})$};
\end{scope}
\draw [->,thick] (-2.6,-4.1) -- (-2.3,-2.7);
\end{tikzpicture}
\end{center}
\caption{Colormap of the difference $ \hat{V}_{\text{th}}-\hat{V}_{\max }$ as a function of $\hat{\eta}$ and $\hat{\varepsilon}$. As in Fig.~\ref{fig:max_discharge_velocity_trapez_pulse_AS}, $\hat{V}_{\max}(\hat{\eta},\hat{\varepsilon})$ is obtained  using expression \eqref{eq:V_max_eta_eps} and $\hat{V}_{\text{th}} = 16.3$ is the typical velocity of confined compression experiments~\cite{Soltz1998}.
The curve in the parameter space for which $ \hat{V}_{\text{th}}=\hat{V}_{\max }$ is reported in a thick black mark.}
\label{fig:vth-vmax}
\end{figure}

Interestingly, expression \eqref{eq:scalf_eta} defines  $\hat{\eta}= (K_0/L^2)\eta$, meaning that $\hat{\eta}$ can be increased by increasing the viscoelastic parameter $\eta$, increasing the permeability parameter $K_0$, or decreasing the sample thickness $L$. In the same spirit as~\cite{guidoboni2006}, a typical value of the viscoelastic parameter $\eta$ for biological tissues can be estimated by setting
%It is interesting to extend the poro-elastic model of~\cite{Soltz1998} to include also the effect of
%viscoelasticity. To do this, we follow the approach used in~\cite{ARMA} and, based on the physical analysis
%of~\cite{guidoboni2006}, we define the value of the viscoelastic constant $\eta$ as
\begin{subequations}\label{eq:creep_viscoelaticity}
\begin{align}
& \eta= \mu \tau_e & \label{eq:eta_creep}
\end{align}
where
\begin{align}
& \tau_e = L \sqrt{\frac{\rho}{\mu}} & \label{eq:elastic_time}
\end{align}
is the characteristic elastic time constant of the porous material under compression and $\rho$ is the mass density of the fluid component of the mixture. Since the densities of most of the biological fluids are very similar to that of water, let us set
$\rho=1000 \unit{\,Kg\, m^{-3}}$.
Substituting~\eqref{eq:eta_creep} and~\eqref{eq:elastic_time} in~\eqref{eq:scalf_eta} we obtain
\begin{align}
& \hat{\eta} = \frac{\tau_e}{[t]}= \frac{K_0}{L} \sqrt{\rho \mu}. & \label{eq:hat_eta_creep}
\end{align}
Replacing the parameter values of Table~\ref{tab:atesian_soltz} into~\eqref{eq:hat_eta_creep}
we obtain $\hat{\eta} = 1.15 \cdot 10^{-8}$, which is 5 orders of magnitude smaller than what is needed to attain fluid velocities below the recommended threshold in the case of a rectangular pulse in the pressure load. Thus, in order to control the maximum fluid velocity within the tissue sample, one may decide to:
(i) vary the properties of the biological sample by manipulating $\rho$, $\mu$, $\eta$, $K_0$ or $L$; and/or
(ii) vary the time profile of the load experimental protocol. The choice will depend on the particular conditions and constraints of the experiment at hand.

\end{subequations}

%-----------------------------------------------------------------------------
\section{Conclusions and future perspectives}\label{sec:conclusions}
%-----------------------------------------------------------------------------

The exact solutions obtained for the 1D models considered in this article allowed us to clearly identify a blow-up in the solution of certain poroelastic problems. The analysis allowed us to identify the main factors that give rise to the blow-up, namely the absence of structural viscoelasticity \textit{and} the time-discontinuity of the boundary source of traction. It is very important to emphasize that even a small viscoelastic contributions, namely $\heta\ll 1$, will prevent the blow-up. Similarly, a continuous time profile of the applied boundary load will prevent blow-up even if the structure is purely elastic.

These findings actually provide an evidence of a blow-up that was hypothesized by the theoretical work presented in~\cite{ARMA}. Interestingly, the a priori estimates derived in~\cite{ARMA} were not sufficient to bound the power density if the data were not regular enough. The 1D examples considered in this article show that indeed it is not possible to bound the power density if the structure is purely elastic and the boundary forcing term is discontinuous in time. Moreover, we have shown that this blow-up occurs even in the simple case of constant permeability.

In real situations, we will never see the fluid velocity spiking to infinity as predicted by the mathematical blow-up, since something will break first! For example, if the poroelastic model represents a biological tissue perfused by blood flowing in capillaries, as the maximum velocity becomes too high, capillaries will break letting blood out. Thus, from a practical perspective, it is crucial to identify parameters that can control the maximum value of the fluid velocity within a deformable porous medium in order to avoid microstructure damage. Our analysis showed that the maximum fluid velocity can be limited by:
(i) decreasing $P_{\text{ref}}$, thereby reducing the load on the medium;
(ii) increasing $\hat{\varepsilon}$, thereby smoothing the load action on the medium;
(iii) increasing $\eta$, thereby changing the structural properties of the solid component of the medium;
(iv) decreasing $K_0$, thereby changing the pore size, geometry and/or fluid viscosity;
(v) decreasing $L$, thereby changing the domain geometry. We have explored these concepts more concretely by applying them to the confined compression tests for biological tissues.

It is worth noting that
the extreme sensitivity of biological tissues to the active role of viscoelasticity
predicted by our theoretical analysis seems to agree with the conclusions of~\cite{Sack2009}
where experimental data based on magnetic resonance elastography show that viscoelasticity of
the brain is a result of structural alteration occurring in the course of physiological aging,
this suggesting that cerebral viscoelasticity may provide a sensitive marker
for a variety of neurological diseases such as normal pressure hydrocephalus,
Alzheimer's disease, or Multiple Sclerosis.

Further extensions of the current work may include:
(1) the study of porous deformable media with compressible components. This case mathematically 
corresponds to considering~\eqref{eq:fluid_content} instead of~\eqref{eq:fluid_content_incompressible}, 
and is of interest 
in the wider context of applications of the poro-visco-elastic model to problems in geomechanics
(cf.~the original contribution by Biot in~\cite{biot} and the more recent 
works~\cite{phillips,phillips2,phillips3} and~\cite{Barucq_1,Barucq_2}, 
devoted to computational analysis and theoretical investigations, respectively); (2)
the investigation of time discontinuities in the volumetric sources of linear momentum, which were identified by the analysis in~\cite{ARMA} as additional possible causes of blow-up. This case is actually extremely relevant in situations where the gravitational acceleration varies abruptly, such as during space flight take off and landing.

Finally, it is worth emphasizing that the 1D analysis presented in this article has made available a series of testable conditions leading to blow-up, and consequent microstructural damage, that could indeed be verified in laboratory experiments. We see the design and implementation of such experiments as the most interesting development of the present work.

%%%%%%%%%%%%%%%%%%%%%%%%%%%%%%%%%%%%%%%%%%
\section*{Acknowledgements}
Dr. Bociu has been partially supported by NSF CAREER DMS-1555062. Dr. Guidoboni has been partially supported by the award NSF DMS-1224195, the Chair Gutenberg funds of the
Cercle Gutenberg (France) and the LabEx IRMIA (University of Strasbourg, France).
Dr. Sacco has been partially supported by Micron Semiconductor Italia S.r.l., SOW nr. 4505462139.

%%%%%%%%%%%%%%%%%%%%%%%%%%%%%%%%%%%%%%%%%%

%%%%%%%%%%%%%%%%%%%%%%%%%%%%%%%%%%%%%%%%%%
\bibliographystyle{plain}
\bibliography{biblio}
%%%%%%%%%%%%%%%%%%%%%%%%%%%%%%%%%%%%%%%%%%

\end{document}